\newcommand{\DD}{\textnormal{D}} 
\def\N{{\mathbb N}}
\date{}
\definecolor{sah}{rgb}{0.66,0.33, 0.04}
\definecolor{adel4}{cmyk}{1,0,0,0}
\definecolor{adel3}{rgb}{0.66,0.33, 0.04}
\definecolor{adel1}{cmyk}{0,0.20,1,0}
\definecolor{adel2}{cmyk}{0,0.40,1,0.30}
\definecolor{adel0}{rgb}{0.99,0.60, 0.30}
\definecolor{trut}{rgb}{0.99,0.80, 0.00}
\definecolor{trus}{rgb}{0.00, 0.50, 0.00}
 \definecolor{trust}{rgb}{0.99, 0.99, 0.80}
\definecolor{MaCouleur}{rgb}{0,0.9,0.3}
\def\virgp{\raise 2pt\hbox{,}}
\def\({\left(}
\def\){\right)}
\def\<{\langle}
\def\>{\rangle}
\theoremstyle{plain}
\newtheorem*{ex}{Example}
\newtheorem{Theo}{Theorem}
 \newtheorem{Lemm}{Lemma}
\newtheorem{Prop}{Proposition}
 \newtheorem{Coro}{Corollary}
 \newtheorem{Defin}{ Definition}
 \newtheorem{rema}{Remark}
\def\Xint#1{\mathchoice
   {\XXint\displaystyle\textstyle{#1}}%
   {\XXint\textstyle\scriptstyle{#1}}%
   {\XXint\scriptstyle\scriptscriptstyle{#1}}%
   {\XXint\scriptscriptstyle\scriptscriptstyle{#1}}%
   \!\int}
\def\XXint#1#2#3{{\setbox0=\hbox{$#1{#2#3}{\int}$}
     \vcenter{\hbox{$#2#3$}}\kern-.5\wd0}}
\def\av_#1{\Xint-_{#1}}
\newcommand{\ZZ}{\mathbb{Z}}  
\newcommand{\NN}{\mathbb{N}} 
\newcommand{\R}{{\mathbb R}}
\newcommand{\RR}{{\mathbb R}}
\newcommand{\lb}{{{\rm LBMO}}}
\newcommand{\baa}{\LMO}
\newcommand{\ba}{{{\it L^{\alpha}mo}_F}}
\newcommand{\bas}{{{\it L^{\alpha}mo}_{1+F}}}
\newcommand{\LL}{{|\ln r|^\alpha}}
 \newcommand{\LMO}{ {\it L^{\alpha}mo}}
  \title[]{On the global well-posedness for Euler equations with unbounded vorticity}
\author[F. Bernicot]{Fr\'ed\'eric Bernicot}
\address{CNRS - Universit\'e de Nantes \\ Laboratoire de Math\'ematiques Jean Leray \\ 2, Rue de la Houssini\`ere F-44322 Nantes Cedex 03, France}
 \email{frederic.bernicot@univ-nantes.fr}
\author[T. Hmidi]{Taoufik Hmidi}
\address{IRMAR, Universit\'e de Rennes 1 \\ Campus de Beaulieu \\  35 042 Rennes cedex, France}
 \email{thmidi@univ-rennes1.fr}
\thanks{The two authors are partly supported by the ANR under the project AFoMEN no. 2011-JS01-001-01.}
\date{\today}
\subjclass[2000]{76B03 ; 35Q35}
\keywords{ 2D incompressible Euler equations, Global well-posedness, {\rm BMO}-type space}
\begin{document}

\begin{abstract}
In this paper, we are interested in the global persistence regularity  for the 2D incompressible Euler equations in some function spaces allowing unbounded vorticities. More precisely,  we prove the  global propagation of the vorticity in some weighted Morrey-Campanato spaces and in this framework the velocity field is not necessarily  Lipschitz but belongs to the log-Lipschitz class $L^\alpha L,$ for some $\alpha\in (0,1).$
\end{abstract}

\maketitle

\begin{quote}
\footnotesize\tableofcontents
\end{quote}

\section{Introduction}
The motion of incompressible perfect flows evolving in the whole space is governed by the  Euler system described by the equations
\begin{equation}
\label{E}
 \left\{ 
\begin{array}{ll} 
\partial_t u+u\cdot\nabla u+\nabla P=0,\qquad x\in \mathbb R^d, t>0, \\
\textnormal{div }u=0,\\
u_{\mid t=0}= u_{0}.
\end{array} \right.    
     \end{equation}
 Here, the vector field   \mbox{$u: \mathbb R_+\times\mathbb R^d\to \RR^d$}  denotes  the velocity of the fluid particles and  the scalar function  
\mbox{$P$}   stands for  the   pressure.  It is a classical fact that the incompressibility condition leads to a closed system and the pressure can be recovered  from the velocity through  some singular operator. The literature on the   well-posedness theory for Euler system is  very abundant and a lot of results were obtained in various function spaces. For instance, it is well-known according to the work of Kato and Ponce \cite{Kato} that the system \eqref{E} admits a maximal unique solution in the framework of Sobolev spaces, namely $u_0\in W^{s,p},$ \mbox{with $s>\frac{d}{p}+1.$} This result was extended to H\"{o}lder spaces $\mathcal{C}^s, s>1$ by Chemin \cite{Ch1} and later by Chae \cite{Cha2} in the critical and sub-critical Besov spaces, see also \cite{Pak}. We point out that the common technical ingredient of these contributions   is the use of  the commutator theory but with slightly different difficulties.
Even though, the local theory for classical solutions is well-achieved, the global existence  of such solutions is still now an  outstanding open problem due to the  poor knowledge of  the conservation laws. However  this problem is affirmatively solved  for  some special cases like the dimension two and the axisymmetric flows without swirl. It is worthy pointing out that for these known cases the geometry of the initial data plays a central role through the special structure of their vorticities. Historically, we can fairly say that Helmholtz was the first to point out  in the seminal paper  \cite{Helm} the importance of the vorticity $\omega\triangleq \textrm{curl}\, u$ in the study of the incompressible inviscid  flows.  In  that paper he provided the foundations of the vortex motion theory by the establishment of some   basic  laws governing the  vorticity. Some decades later in the thirties of the last century, Wolibner proved in \cite{Wolib} the global existence of sufficiently smooth solutions in space dimension two. Very later in the mid of the eighties,   a rigorous connection between the vorticity and the global existence was performed by   Beale, Kato and Majda in \cite{Beale}. They proved the following blow up criterion: let $ u_0\in H^s,$ with $ s>\frac{d}{2}+1$ and denote by $T^\star$ the lifespan of the solution, then
$$
T^\star<+\infty\Longrightarrow \int_0^{T^\star}\|\omega(\tau)\|_{L^\infty}d\tau=+\infty.
$$ 
An immediate consequence of this criterion is the global existence of Kato's solutions in space dimension two. This follows from the conservation of the vorticity along the particle trajectories, namely the vorticity satisfies the Helmholtz equation
\begin{equation}\label{vorts1}
\partial_t\omega+u\cdot\nabla \omega=0.
\end{equation}
Recall that in this case the vorticity can be assimilated to the scalar $\omega=\partial_1 u^2-\partial_2 u^1$ and we derive from the  equation \eqref{vorts1}  an infinite family of conservation laws. For instance, for \mbox{every $p\in[1,\infty]$}
$$
\forall t\geq 0,\quad\|\omega(t)\|_{L^p}=\|\omega_0\|_{L^p}.
$$  
It seems that the standard methods used for the local theory cease to work in the limiting  space $H^{\frac{d}{2}+1}$ due to the lack of embedding in the  Lipschitz class. Nevertheless the well-posedness theory can be successfully implemented  in a slight modification of  this space in order to guarantee this embedding, take for example Besov spaces of type $B_{p,1}^{\frac{d}{p}+1}$,  for more details see for instance \cite{Cha2}. In this critical framework the BKM criterion cited before  is not known to work and should be replaced by the following one,
$$
T^\star<+\infty\Longrightarrow \int_0^{T^\star}\|\omega(\tau)\|_{B_{\infty,1}^0}d\tau=+\infty.
$$ 
In this class of initial data the global well-posedness in dimension two is not a trivial task and was proved by Vishik in \cite{Vishik2} through the use in an elegant way of the conservation of the Lebesgue measure by the flow. We mention that a simple proof of Vishik's result, which has the advantage to work in the viscous case, was given  in \cite{HK}. By using the formal $L^p$ conservation laws it seems that  we can go beyond the limitation fixed by the general theory of hyperbolic systems  and construct global weak solution for $p>1$ but for  the uniqueness we require in general the vorticity to be bounded. This was carefully done by Yudovich in his paper  \cite{Y1} following the tricky remark that the gradient of the velocity belongs to all $L^p$ with slow growth with respect to $p$:
$$
\sup_{p\geq2}\ \frac{\|\nabla v(t)\|_{L^p}}{p}<\infty.
$$
The uniqueness part is obtained by performing energy estimate and choosing  suitably the parameter $p$.
 In this new pattern   the velocity belongs to  the class of log-Lipschitz functions and this is sufficient to establish the existence and uniqueness of the flow map, see for instance \cite{Ch1}. 
The  real matter  at this level of regularity  concerns only  the uniqueness part which requires minimal regularity for the velocity  and the assumption of bounded vorticity is almost necessary in the scale of Lebesgue spaces. However  slight improvements have been carried out during the last decades by allowing the vorticity to be unbounded. For example, in \cite{Y2} Yudovich proved the uniqueness when the $L^p$-norms of the initial vorticity do not grow much faster than $\ln p:$
$$
\sup_{p\geq2}\ \frac{\|\omega_0\|_{L^p}}{\,\ln p}<\infty.
$$
We refer also to \cite{De,DM} for other extensions on the construction of global weak solutions. In \cite{Vishik1}, Vishik accomplished  significant studies for the existence and uniqueness problem with unbounded vorticities. He gave various results when the vorticity lies in the space $B_\Gamma\cap L^{p_0}\cap L^{p_1}$, where $p_0<2<p_1$  and $B_\Gamma$ is the borderline Besov spaces defined by
\begin{equation}\label{space-vis}
\sup_{n\geq 1}\ \frac{1}{\Gamma(n)}{\sum_{q=-1}^{n}\|\Delta_q\omega_0\|_{L^\infty}}<\infty.
\end{equation}
As an example, it was shown that for $\Gamma(n)=O(\ln n)$ there exits a unique local existence but the global existence is only proved when $\Gamma(n)=O(\ln^{\frac12} n).$ Nevertheless the propagation of the initial regularity is not well understood and Vishik were only able to prove that for the positive times the vorticity belongs  to the big class $B_{\Gamma_1}$ \mbox{with $\Gamma_1(n)=n\Gamma(n).$ } We point out that the persistence regularity for spaces which are  not embedded either in the Lipschitz class or in the spaces of conservation laws  is in general a difficult subject. Recently, in \cite{BK2}  the first author and Keraani were able to find a suitable space of initial data called log-$BMO$ space for which there is global existence and uniqueness without any loss of regularity. This space is strictly larger than the $L^\infty$ space and much smaller than the usual $BMO$ space.

\bigskip

The main goal of this paper is to continue this investigation and try to generalize the result of \cite{BK2}  to  a new collection of spaces which  are not comparable to the bounded class. To state our main result  we need to introduce the following spaces.
\begin{Defin}
Let $\alpha \geq0$ and $f:\R^2\to \R$ be a locally integrable function.
\begin{enumerate}
\item We say that $f$ belongs to the space $\baa$ if 
$$
\|f\|_{\baa}\triangleq\sup_{B\, \hbox{\tiny ball}\\
\atop 0<r\le \frac12}|\ln {r}|^\alpha \av_{B} \left|f-\av_{B}f \right|<\infty.
$$
\item Let $F:[1,+\infty[\to[0,+\infty[$. We say that $f$ belongs to $\ba$ if
$$
\|f\|_{\ba}\triangleq\|f\|_{\baa}+\sup_{B_1, B_2\\
\atop 2r_2\le r_1\le\frac12}\frac{|\av_{B_2}f-\av_{B_1}f|}{ F\left(\frac{|\ln(r_2)|}{|\ln(r_1)|}\right)}<\infty,
$$
where $ r_i$ denotes the radius of  the ball $B_i$, $|B|$ denotes the Lebesgue measure of the ball $B$ and the average $\displaystyle{\av_{B}f}$ is defined by $$
\av_{B}f\triangleq\frac1{|B|}\int_Bf(x)dx.
$$  
\end{enumerate}
\end{Defin}
For the sake of a clear presentation we  will first state a partial result and the general one will be given in Section \ref{sec:per}, Theorem \ref{apriori}.
\begin{Theo} 

\label{main} 
Take $F(x)=\ln x$ and assume that {$\omega_0\in L^p\cap \ba$}  with  $p\in ]1,2[$ \mbox{and $\alpha\in ]0,1[$. }
Then the $2d$ Euler equations admit  a unique  global solution 
$$\omega\in L^\infty_{loc}([0,+\infty[,L^p\cap\bas).
$$
\end{Theo}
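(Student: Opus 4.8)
The plan is to derive uniform \emph{a priori} estimates for smooth solutions in $L^p\cap\bas$ and then combine them with a regularization-and-compactness scheme for existence and a Yudovich--Osgood argument for uniqueness. Since the vorticity is transported by the measure-preserving flow $\psi_t$ solving $\partial_t\psi_t(x)=u(t,\psi_t(x))$, $\psi_0=\mathrm{Id}$, one has $\omega(t,\cdot)=\omega_0\circ\psi_t^{-1}$; hence $\|\omega(t)\|_{L^p}=\|\omega_0\|_{L^p}$ is automatic and the whole difficulty lies in propagating the $\bas$ norm. The first step is the harmonic-analysis input: a logarithmic estimate quantifying the smoothing of the Biot--Savart law $u=\nabla^\perp\Delta^{-1}\omega$. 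I would show that $\omega\in\baa\cap L^p$ forces $u$ to be log-Lipschitz with modulus of continuity $\rho\mapsto\rho\,|\ln\rho|^{1-\alpha}$ (a \emph{sub-critical}, better-than-Yudovich, class), the constant being controlled linearly by $\|\omega\|_{\baa}+\|\omega\|_{L^p}$. The mechanism is that the decay $|\ln r|^{-\alpha}$ of the mean oscillations of $\omega$, transferred through the Biot--Savart law, produces the velocity exponent $1-\alpha<1$ instead of the critical Yudovich exponent $1$; the $L^p$ control handles the low-frequency, large-scale part.

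Second, using this modulus I would study the flow. Writing $\rho(t)=|\psi_t(x)-\psi_t(y)|$ and integrating $|\dot\rho|\lesssim\rho\,|\ln\rho|^{1-\alpha}$ along trajectories gives, for $\alpha\in(0,1)$, the Osgood-type comparison $|\ln\rho(t)|^{\alpha}\gtrsim|\ln|x-y||^{\alpha}-Ct$. This is far milder than the double-exponential loss of the Lipschitz-borderline case: it keeps $\psi_t^{\pm1}$ bi-H\"older for all time, and, crucially, makes the ratio $|\ln|\psi_t(x)-\psi_t(y)||/|\ln|x-y||$ tend to $1$ at small scales. This ratio is exactly the quantity entering the $F$-part of the $\ba$ norm. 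The same modulus yields the flow as a homeomorphism and, via an Osgood lemma applied to an $L^2$ estimate on the difference of two solutions, the uniqueness statement.

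The core of the proof is the propagation of the $\bas$ norm through $\omega(t)=\omega_0\circ\psi_t^{-1}$, estimated by splitting it into the $\baa$ seminorm and the double-ball increment term. For a ball $B$ of radius $r\le\frac12$ the preimage $\psi_t^{-1}(B)$ is not a ball, so I would compare $\av_{B}|\omega(t)-\av_{B}\omega(t)|$ with the mean oscillation of $\omega_0$ over a genuine ball comparable to $\psi_t^{-1}(B)$, absorbing the geometric distortion into the logarithmic weight by means of the bi-H\"older modulus above; this propagates the $\baa$ part with a time-dependent constant. For the increment term one takes nested balls $B_2\subset B_1$ with $2r_2\le r_1\le\frac12$ and compares $\av_{B_2}\omega(t)-\av_{B_1}\omega(t)$ with the corresponding increment of $\omega_0$ over the images; the flow's comparison of logarithmic scales turns $|\ln r_2|/|\ln r_1|$ into the analogous ratio for the image radii, and the additive correction coming from $|\ln\rho(t)|^\alpha\gtrsim|\ln\rho(0)|^\alpha-Ct$ is precisely what upgrades the modulus $F(x)=\ln x$ into $1+F$. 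Closing the resulting integral inequality by Gronwall (or Osgood) then gives $\omega\in L^\infty_{\mathrm{loc}}([0,\infty[,L^p\cap\bas)$.

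I expect the principal obstacle to be this last step, namely the interplay between the non-Lipschitz geometry of the flow and the logarithmic weights: since $\psi_t^{\pm1}$ distorts balls into elongated near-balls, one must show that averaging $\omega_0$ over the distorted preimages is comparable --- up to the controlled additive loss $F\to1+F$ --- to averaging over concentric balls, \emph{uniformly down to arbitrarily small scales}. Verifying that this degradation is genuinely additive rather than multiplicative (a multiplicative loss would destroy the global-in-time bound) is the technical heart; the Biot--Savart estimate of the first step and the sub-critical Osgood bound of the second are the tools that make it work.
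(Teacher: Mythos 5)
Your proposal is correct and follows essentially the same route as the paper: the Biot--Savart estimate $\|u\|_{L^{1-\alpha}L}\lesssim \|\omega\|_{\LMO\cap L^p}$ is Proposition \ref{coro}, the Osgood-type flow bound $|\ln z(t)|^{\alpha}\geq |\ln z(0)|^{\alpha}-C\alpha V(t)$ is Proposition \ref{prop}, the propagation of the norm under composition with the measure-preserving flow, with precisely the additive loss $F\to 1+F$, is Theorem \ref{decom}, and the coupled integral inequality is closed by Osgood's lemma in Theorem \ref{apriori}. The step you single out as the technical heart --- showing that the flow's distortion of balls costs only an additive, not multiplicative, correction --- is exactly what the paper resolves, via a Whitney covering of $\psi(B)$, a critical scale $r_t$ separating small and large radii, and the boundary-layer measure estimate of Lemma \ref{equivalence}.
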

Some remarks are in order.
\begin{rema} 
The regularity of the initial vorticity measured in the space $\baa$  is preserved globally in time. However we bring up a slight loss of regularity in the second part of the $\ba$ norm. Instead of $F$ we need $1+F.$ This appears as a technical artefact and we believe that we can remove it.
\end{rema}
\begin{rema} 
The case $\alpha=0$ is not included in our statement since it corresponds to the result of \cite{BK2}. However for $\alpha>1$ the vorticity must be bounded and  the velocity is Lipschitz and in this case the propagation in the space $\baa$ can be done without the use of the second part of the space $\ba$. The  limiting case $\alpha=1$ is omitted in our main result for the sake of simplicity but our computations can be performed as well with slight modifications especially when we deal with the regularity of the flow in Proposition \ref{prop}. 
\end{rema}

The proof of Theorem \ref{main} will be done in the spirit of the work of \cite{BK2}. We  establish a crucial logarithmic estimate for the composition in the space $\ba$ with a flow which preserves Lebesgue measure.  We prove in particular the key estimate 
$$
\|\omega(t)\|_{\baa}\le C\|\omega_0\|_{\ba}\left(1+V(t)  \right)\ln (2+V(t)), 
$$
where  $\displaystyle{ V(t)=\int_0^t\|u(\tau)\|_{L^{1-\alpha}L}d\tau}$ 
and the space $L^{1-\alpha}L$ is defined in Section \ref{log-lip}. We observe  from the preceding estimate that we can propagate globally in time the regularity in the space $\baa$ and the second part of the space $\ba$ is not  involved for the positive times.\\

The remainder of this  paper is organized as follows. In the  next  section we introduce some functional spaces and prove some of their basic properties. We shall also examine the regularity of the flow map associated to a vector field belonging to the class $L^\alpha L.$ In Section $3$ we shall establish a logarithmic estimate for a transport model  and we will see how to derive some of their consequences in the study of the inviscid flows. The proof of the main results will be given at the end of this section. We close this paper with an appendix covering the proof of some technical lemmata.

\section{Functional tools}
This section is devoted to some useful tools. We will firstly recall some classical spaces like Besov spaces and BMO spaces and give a short presentation of Littlewood-Paley operators. Secondly, we introduce the spaces $\baa$ and $\ba$ and discuss some of their important properties. We end  this section with  the study of log-Lipschitz spaces.\\
In the sequel we denote by $C$ any positive constant that may change from line to line and $C_{0}$ a real positive constant depending on the size of the initial data. We will use the following notations: for any non-negative real numbers $A$ and $B$, the notation  $A\lesssim B$ means that there exists a positive constant $C$ independent of $A$ and $B$ and such \mbox{that $A\leqslant CB$.}\\

\subsection{Littlewood-Paley operators}\label{subsection1}

To define Besov spaces we first introduce  the  dyadic partition of the unity, for more details see for  instance  \cite{Ch1}. There are  
two non-negative radial functions $\chi\in\mathcal{D}(\mathbb{R}^{2})$ and $\varphi\in\mathcal{D}(\mathbb{R}^{2}\backslash\{ 0\})$ such that 
$$\chi(\xi)+ \displaystyle \sum_{q\ge 0}\varphi(2^{-q}\xi)=1, \quad\forall \xi\in\mathbb{R}^{2},$$
$$\displaystyle\sum_{q\in\mathbb{Z}}\varphi(2^{-q}\xi)=1, \quad\forall \xi\in\mathbb{R}^{2}\backslash\{0\},$$
$$\vert p-q\vert\ge 2\Rightarrow\mbox{supp }{\varphi}(2^{-p}\cdot)\cap\mbox{supp }{\varphi}(2^{-q}\cdot)=\emptyset,$$
$$q\ge 1\Rightarrow \mbox{supp }{\chi}\cap\mbox{supp }{\varphi}(2^{-q}\cdot)=\emptyset.$$
%\end{enumerate}
Let $u\in\mathcal{S}^{\prime}(\RR^2)$, the  Littlewood-Paley operators are defined by
\begin{eqnarray*}
\Delta_{-1}u=\chi(\DD)u,\;\;\forall q\ge 0,\;\;\Delta_{q}u=\varphi(2^{-q}\DD)u\;\;\textnormal{and}\;\;S_{q}u=\displaystyle \sum_{-1\le p\le q-1}\Delta_{p}u.
\end{eqnarray*}
We can  easily check that in the distribution sense we have the identity $$u=\sum_{q\in \ZZ}\Delta_{q}u,\;\;\forall u \in \mathcal{S}^{\prime}(\RR^{2}).$$
Moreover, the Littlewood-Paley decomposition satisfies the property of almost orthogonality: for any $u,v\in\mathcal{S}^{\prime}(\RR^2),$
$$\Delta_{p}\Delta_{q}u=0\qquad \textnormal{if} \qquad \vert p-q \vert \geqslant 2 \qquad$$
$$\Delta_{p}(S_{q-1}u\Delta_{q}v)=0 \qquad \textnormal{if} \qquad  \vert p-q \vert \geqslant 5.$$\\
Let us note that the above operators $\Delta_{q}$ and $S_{q}$ map continuously $L^{p}$ into itself uniformly with respect to  $q$ and $p$. 
We also notice that these operators are of convolution type. For example for $q\in\ZZ,\,$ we have
$$\Delta_{-1}u=h\ast u,\quad \Delta_{q}u=2^{2q}g(2^q\cdot)\ast u,\quad\hbox{with}\quad g,h\in\mathcal{S},\quad \widehat{h}(\xi)=\chi(\xi),\quad \widehat{g}(\xi)=\varphi(\xi).
$$
Now we recall Bernstein inequalities, see for example \cite{Ch1}.
\begin{Lemm}\label{ber}
There exists a constant $C>0$ such that for all $q\in\NN\,,\,k \in \NN$ and for any tempered distribution $u$ we have  
\begin{eqnarray*}
\sup_{\vert\alpha\vert=k}\Vert\partial^{\alpha}S_{q}u\Vert_{L^{b}}\leqslant C^{k}2^{q\big(k+2\big(\frac{1}{a}-\frac{1}{b}\big)\big)}\Vert S_{q}u\Vert_{L^{a}}\quad \textnormal{for}\quad \; b\geqslant a\geqslant 1\\
C^{-k}2^{qk}\Vert{\Delta}_{q}u\Vert_{L^{a}}\leqslant \sup_{\vert\alpha\vert=k}\Vert\partial^{\alpha}{\Delta}_{q}u\Vert_{L^{a}}\leqslant C^{k}2^{qk}\Vert {\Delta}_{q}u\Vert_{L^{a}}.
\end{eqnarray*}
\end{Lemm}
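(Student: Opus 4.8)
The plan is to reduce everything to one elementary principle: a Fourier multiplier $m(2^{-q}\DD)$, with $m$ smooth and supported in a fixed compact set, acts by convolution against a rescaled Schwartz kernel, so it is controlled on Lebesgue spaces by Young's inequality. Concretely, in dimension two one has $m(2^{-q}\DD)f=2^{2q}(\mathcal F^{-1}m)(2^q\cdot)\ast f$, whence for $b\ge a\ge 1$ and $\tfrac1r=1-(\tfrac1a-\tfrac1b)$,
$$\|m(2^{-q}\DD)f\|_{L^b}\le \|2^{2q}(\mathcal F^{-1}m)(2^q\cdot)\|_{L^r}\,\|f\|_{L^a}=2^{2q(\frac1a-\frac1b)}\,\|\mathcal F^{-1}m\|_{L^r}\,\|f\|_{L^a}.$$
First I would record the two spectral localizations: there exist fixed $\Psi\in\mathcal D(\RR^2)$ supported in a ball and $\widetilde\varphi\in\mathcal D(\RR^2\setminus\{0\})$ supported in an annulus such that $\Psi(2^{-q}\xi)=1$ on the Fourier support of $S_qu$ and $\widetilde\varphi(2^{-q}\xi)=1$ on that of $\Delta_qu$, so that $S_qu=\Psi(2^{-q}\DD)S_qu$ and $\Delta_qu=\widetilde\varphi(2^{-q}\DD)\Delta_qu$.

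For the first inequality I would differentiate the identity $S_qu=\Psi(2^{-q}\DD)S_qu$: the symbol of $\partial^\alpha\Psi(2^{-q}\DD)$ is $(i\xi)^\alpha\Psi(2^{-q}\xi)=2^{qk}H_\alpha(2^{-q}\xi)$, where $H_\alpha(\eta)=(i\eta)^\alpha\Psi(\eta)$ and $|\alpha|=k$. Applying the displayed principle with $m=H_\alpha$ and $\tfrac1r=1-(\tfrac1a-\tfrac1b)$ gives precisely $\|\partial^\alpha S_qu\|_{L^b}\le 2^{q(k+2(\frac1a-\frac1b))}\|\mathcal F^{-1}H_\alpha\|_{L^r}\|S_qu\|_{L^a}$, which is the claimed estimate as soon as $\|\mathcal F^{-1}H_\alpha\|_{L^r}\le C^k$. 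The upper bound on $\Delta_q$ is the special case $a=b$, $r=1$, with $\widetilde\varphi$ in place of $\Psi$.

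For the lower bound on $\Delta_q$ I would invert the derivatives on the annulus by the multinomial identity $|\eta|^{2k}=\sum_{|\alpha|=k}\binom{k}{\alpha}\eta^{2\alpha}$. Since $|\eta|$ is bounded below on $\operatorname{supp}\widetilde\varphi$, inserting a cutoff $\widetilde{\widetilde\varphi}\equiv1$ there and using $\eta^{2\alpha}=(-1)^k(i\eta)^\alpha(i\eta)^\alpha$ yields $1=\sum_{|\alpha|=k}\theta_\alpha(\eta)(i\eta)^\alpha$ on $\operatorname{supp}\widetilde\varphi$, with $\theta_\alpha(\eta)=(-1)^k\binom{k}{\alpha}|\eta|^{-2k}(i\eta)^\alpha\widetilde{\widetilde\varphi}(\eta)$. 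Passing to scale $2^{-q}$ this reads $\Delta_qu=2^{-qk}\sum_{|\alpha|=k}\theta_\alpha(2^{-q}\DD)\partial^\alpha\Delta_qu$, so the same multiplier principle (with $a=b$, $r=1$) gives $2^{qk}\|\Delta_qu\|_{L^a}\le\big(\sum_{|\alpha|=k}\|\mathcal F^{-1}\theta_\alpha\|_{L^1}\big)\sup_{|\alpha|=k}\|\partial^\alpha\Delta_qu\|_{L^a}$, again reducing everything to a geometric-in-$k$ control of the kernels.

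The main obstacle, and the only genuinely quantitative point, is the uniform bound $\|\mathcal F^{-1}H_\alpha\|_{L^r}\le C^k$ (and likewise for $\theta_\alpha$), uniformly over $|\alpha|=k$ and $r\in[1,\infty]$. I would obtain the $L^\infty$ norm from $\|\mathcal F^{-1}H_\alpha\|_{L^\infty}\le\|H_\alpha\|_{L^1}$ and the $L^1$ norm from the weighted estimate $\|\mathcal F^{-1}H_\alpha\|_{L^1}\le \|(1+|\cdot|^2)^{-2}\|_{L^2}\,\|(\mathrm{Id}-\lap_\eta)^2H_\alpha\|_{L^2}$, which is legitimate because $(1+|x|^2)^{-2}$ is square integrable on $\RR^2$; intermediate $r$ follow by interpolation. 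Expanding $(\mathrm{Id}-\lap_\eta)^2[(i\eta)^\alpha\Psi]$ by Leibniz produces derivatives of $\eta^\alpha$ of order at most four, and on the fixed compact support each term is bounded by a polynomial in $k$ times $R^{k}$, where $R$ bounds $|\eta|$ on the support. Since in two dimensions there are only $k+1$ multi-indices with $|\alpha|=k$ and $\binom{k}{\alpha}\le 2^k$, all polynomial and binomial factors are absorbed into a single constant of the required form $C^k$; the identical computation disposes of $\theta_\alpha$, the extra factor $|\eta|^{-2k}$ being harmless since $|\eta|$ is bounded below on the annulus.
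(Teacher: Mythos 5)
The paper does not prove Lemma \ref{ber} at all: it is recalled as a classical fact with a citation to Chemin's book \cite{Ch1}, so there is no proof in the paper to compare against. Your argument is correct and is essentially the standard one from that reference (and from Bahouri--Chemin--Danchin): spectral localization by a fixed cut-off, reduction to Young's inequality for the rescaled kernel with the exponent $\tfrac1r=1-\bigl(\tfrac1a-\tfrac1b\bigr)$, the weighted Cauchy--Schwarz/Plancherel bound giving geometric-in-$k$ control of $\Vert \mathcal{F}^{-1}H_\alpha\Vert_{L^r}$, and the multinomial identity $|\eta|^{2k}=\sum_{|\alpha|=k}\binom{k}{\alpha}\eta^{2\alpha}$ to invert the derivatives on the annulus for the reverse inequality.
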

 Using Littlewood-Paley operators, we can define Besov spaces as follows. 
For $(p,r)\in[1,+\infty]^2$ and $s\in\mathbb R,$ the    Besov 
\mbox{space $B_{p,r}^s$} is 
the set of tempered distributions $u$ such that
$$
\|u\|_{B_{p,r}^s}:=\Big( 2^{qs}
\|\Delta_q u\|_{L^{p}}\Big)_{\ell^{r}}<+\infty.
$$

We remark that the usual Sobolev space $H^s$ coincides with  $B_{2,2}^s$ for $s\in\RR$  and the H\"{o}lder space $C^s$ coincides with $B_{\infty,\infty}^s$ when $s$ is not an integer. 

The following embeddings are an easy consequence of  Bernstein inequalities, 
$$
B^s_{p_1,r_1}\hookrightarrow
B^{s+2({1\over p_2}-{1\over p_1})}_{p_2,r_2}, \qquad p_1\leq p_2\quad and \quad  r_1\leq r_2.
$$

Our next task is to introduce some new function spaces and to study some of their useful properties that will be frequently used along this paper.
\subsection{The $\LMO$ space}

Here the abbreviation  $Lmo$ stands for {\it logarithmic bounded mean oscillation}.

\begin{Defin} Let $\alpha \in[0,1]$ and $f:\R^2\to \R$ be a locally integrable function. We say that $f$ belongs to $\LMO$ if 
$$
\|f\|_{\LMO}:=\sup_{0<r\le \frac12}|\ln {r}|^\alpha \av_{B} \left|f-\av_{B}f \right|+\left(\sup_{|B|=1}\int_{B}|f(x)|dx \right)<\infty,
$$
where  the supremum is taken over all the balls $B$ of radius $r\leq \frac{1}{2}$. 
%Here, $|B|$ denotes the Lebesgue measure of the ball $B$ and the average $\displaystyle{\av_{B}f}$ is defined by $$
%\av_{B}f:=\frac1{|B|}\int_Bf(x)dx,
%$$  
\end{Defin}

We observe that for $\alpha=0$ the space $\LMO$  reduces to the usual ${\it{Bmo}}$ space (the local version of ${\it{BMO}}$). It is also plain that the space $\LMO$ contains the class of continuous functions $f$  such that  
$$
\sup_{0<|x-y|\le\frac12}{|\ln|x-y||^\alpha\,{|f(x)-f(y)|}}<+\infty,
$$
that is the functions of modulus of continuity $\mu(r)=|\ln r|^{-\alpha}.$ There are  two elementary  properties that we wish to mention:
\begin{itemize}
 \item For $\alpha\in]0,1[$, consider  a ball  $B$ of radius $r$ and take  $k\geq 0$ with $2^k r\leq \frac{1}{2}$, then
\begin{eqnarray}
\nonumber \left| \av_{B} f - \av_{2^k B} f \right|& \lesssim& \|f\|_{\LMO} \sum_{\ell=0}^{k} (|\ln r|-\ell)^{-\alpha}\\
& \lesssim& \|f\|_{\LMO} |\ln r|^{1-\alpha }. \label{eq:ea} \end{eqnarray}
 \item For a ball $B$ of radius $1$ and $k\geq 1$, $2^k B$ can be covered by $2^{2k}$ balls of radius $1$, so
\begin{equation} \av_{2^k B} |f| \lesssim \|f\|_{\LMO}. \label{eq:ea2} \end{equation}
\end{itemize}

Next, we discuss some relations between the $\baa$ spaces and the frequency cut-offs.
\begin{Prop} \label{prop:delta} The following assertions hold true.
\begin{enumerate}
\item  Let $f\in \LMO,\,\alpha\in[0,1]$ and $n\in\NN^*$, then 
$$ \| \Delta_n f \|_{L^\infty} \lesssim n^{-\alpha} \|f\|_{\LMO},$$
and if $\alpha \in(0,1)$
\begin{equation*}\| S_nf \|_{L^\infty} \lesssim n^{1-\alpha} \|f\|_{\LMO}.
\end{equation*}
%where $\Delta_n$  and $S_n$ are the  usual smooth dyadic truncations in  frequencies defined in previous subSection.
\item We denote by $\mathcal{R}_{ij}:=\partial_{x_i}\partial_{x_j}\Delta^{-1}$ the ``iterated'' Riesz transform. Then for every function $f\in \LMO \cap L^p$, with 
$p\in(1,\infty)$ and $\alpha\in(0,1)$, $$
\| S_n\mathcal{R}_{ij}f \|_{L^\infty} \lesssim n^{1-\alpha} \|f\|_{\LMO \cap L^p}.
$$
\end{enumerate}
\end{Prop}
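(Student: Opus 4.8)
The engine for part (1) is that $\Delta_n$ is convolution with a rescaled mean-zero Schwartz kernel: $\Delta_n f=2^{2n}g(2^n\cdot)\ast f$ with $g\in\mathcal S$ and $\int g=\widehat g(0)=\varphi(0)=0$, because $0\notin\operatorname{supp}\varphi$. Setting $r=2^{-n}$ and $B=B(x,r)$, this mean-zero property lets me subtract the average,
$$
\Delta_n f(x)=2^{2n}\int_{\R^2}g\big(2^n(x-y)\big)\big(f(y)-\av_{B}f\big)\,dy,
$$
and then split the integral into $B$ and the dyadic shells $2^kB\setminus 2^{k-1}B$, $k\ge1$. After the change of variables the prefactor $2^{2n}r^2$ equals $1$, so the shell indexed by $k$ contributes at most $2^{-k(N-2)}\av_{2^kB}|f-\av_{B}f|$ for any $N$ (by the rapid decay of $g$). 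The central term $k=0$ is the main one and equals $\av_B|f-\av_Bf|\lesssim|\ln r|^{-\alpha}\|f\|_{\LMO}\sim n^{-\alpha}\|f\|_{\LMO}$; on the shells the oscillation $\av_{2^kB}|f-\av_Bf|$ is controlled by \eqref{eq:ea} and \eqref{eq:ea2}, growing at most like $(k+1)\,n^{-\alpha}\|f\|_{\LMO}$ for $k\lesssim n$ and like $n^{1-\alpha}\|f\|_{\LMO}$ for $k\gtrsim n$, so against the factor $2^{-k(N-2)}$ the whole sum stays of order $n^{-\alpha}\|f\|_{\LMO}$. This proves $\|\Delta_n f\|_{L^\infty}\lesssim n^{-\alpha}\|f\|_{\LMO}$.

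For $S_n$ I would sum, $\|S_n f\|_{L^\infty}\le\sum_{q=-1}^{n-1}\|\Delta_q f\|_{L^\infty}$, handling the bottom frequencies separately: $\Delta_0$ is again a mean-zero Schwartz convolution, giving $\|\Delta_0 f\|_{L^\infty}\lesssim\|f\|_{\LMO}$, while for $\Delta_{-1}=\chi(\DD)$ I would cover $\R^2$ by unit balls and use the decay of $h$ together with the $L^1$ part $\sup_{|B|=1}\int_B|f|$ of the $\LMO$ norm to get $\|\Delta_{-1}f\|_{L^\infty}\lesssim\|f\|_{\LMO}$. For $q\ge1$ the first estimate yields $\|\Delta_q f\|_{L^\infty}\lesssim q^{-\alpha}\|f\|_{\LMO}$, and since $\alpha\in(0,1)$ the series $\sum_{q=1}^{n-1}q^{-\alpha}\lesssim n^{1-\alpha}$, which gives the stated $\|S_n f\|_{L^\infty}\lesssim n^{1-\alpha}\|f\|_{\LMO}$ (the restriction $\alpha\in(0,1)$ is exactly what makes this sum grow like $n^{1-\alpha}$).

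For part (2) I would decompose $S_n\mathcal R_{ij}f=\sum_{q=-1}^{n-1}\Delta_q\mathcal R_{ij}f$ and split according to whether the Riesz symbol is regular on the relevant frequencies. For $q\ge0$ the block $\Delta_q\mathcal R_{ij}$ has symbol $\varphi(2^{-q}\xi)\,\xi_i\xi_j/|\xi|^2=\Psi(2^{-q}\xi)$ with $\Psi(\eta)=\varphi(\eta)\,\eta_i\eta_j/|\eta|^2$ Schwartz, supported away from the origin, hence of mean zero; this is precisely the rescaled mean-zero Schwartz structure of part (1), so the same argument gives $\|\Delta_q\mathcal R_{ij}f\|_{L^\infty}\lesssim q^{-\alpha}\|f\|_{\LMO}$ for $q\ge1$ and $\lesssim\|f\|_{\LMO}$ for $q=0$. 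The genuine obstacle, and the reason the $L^p$ hypothesis enters, is the single term $q=-1$: its symbol $\chi(\xi)\,\xi_i\xi_j/|\xi|^2$ is bounded but discontinuous at the origin, so the kernel is not Schwartz and the $\LMO$ argument breaks down. Here I would instead exploit that $\Delta_{-1}\mathcal R_{ij}f$ has frequencies in a fixed ball: the Bernstein inequality of Lemma \ref{ber} gives $\|\Delta_{-1}\mathcal R_{ij}f\|_{L^\infty}\lesssim\|\Delta_{-1}\mathcal R_{ij}f\|_{L^p}$, and then the $L^p$-boundedness of the Calder\'on--Zygmund operator $\mathcal R_{ij}$ (for $1<p<\infty$) and of $\Delta_{-1}$ bounds this by $\|f\|_{L^p}$. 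Summing the three regimes yields $\|S_n\mathcal R_{ij}f\|_{L^\infty}\lesssim\|f\|_{L^p}+n^{1-\alpha}\|f\|_{\LMO}\lesssim n^{1-\alpha}\|f\|_{\LMO\cap L^p}$.

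The hard part is isolated in this low-frequency Riesz piece: away from the origin the iterated Riesz multiplier behaves like an ordinary Littlewood--Paley projection and is harmless on the $\LMO$ scale, but its lack of regularity at the origin forces one off that scale and onto the $L^p$ norm. Everything else is bookkeeping --- balancing the polynomial-in-$k$ oscillation growth against Schwartz decay in part (1), and summing $\sum q^{-\alpha}$ in the passage from $\Delta_q$ to $S_n$.
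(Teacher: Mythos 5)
Your proposal is correct and follows essentially the same route as the paper's proof: the same mean-zero rescaled-kernel argument with dyadic shells controlled via \eqref{eq:ea} and \eqref{eq:ea2} for part (1), the same summation $\sum_q q^{-\alpha}\lesssim n^{1-\alpha}$ for $S_n$ with a separate low-frequency bound using the $\sup_{|B|=1}\int_B|f|$ part of the norm, and the identical Bernstein-plus-Calder\'on--Zygmund treatment of $\Delta_{-1}\mathcal{R}_{ij}f$, which is exactly where the $L^p$ hypothesis enters. The only cosmetic deviation is in part (2) for $q\geq 0$, where you rerun the part-(1) oscillation argument directly on the composite kernel of $\Delta_q\mathcal{R}_{ij}$ (a rescaled mean-zero Schwartz function), while the paper instead bounds $\|\Delta_q\mathcal{R}_{ij}f\|_{L^\infty}\lesssim\|\Delta_q f\|_{L^\infty}$ by the uniform boundedness of the localized Riesz transforms and then applies part (1); the two are interchangeable.
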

This proposition yields easily to the following corollary.
\begin{Coro} We have the embedding $\baa\hookrightarrow B_\Gamma$, see the definition \eqref{space-vis}, with
\begin{itemize}
 \item $\Gamma(N)= \ln(N)$ if $\alpha=1$
 \item $\Gamma(N)=N^{1-\alpha}$ if $\alpha\in(0,1)$.
\end{itemize}
\end{Coro}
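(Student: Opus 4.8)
The plan is to read the corollary off directly from the dyadic estimate in Proposition \ref{prop:delta}(1), matching the resulting growth rates against the definition \eqref{space-vis} of $B_\Gamma$. Since membership in $B_\Gamma$ is governed by the partial sums $\sum_{q=-1}^N \|\Delta_q f\|_{L^\infty}$, it suffices to bound each dyadic block in $L^\infty$ and then sum; there is no need for the finer $\|S_N f\|_{L^\infty}$ bound, because \eqref{space-vis} involves a sum of sup-norms rather than the sup-norm of a sum.

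First I would invoke Proposition \ref{prop:delta}(1), which yields $\|\Delta_q f\|_{L^\infty} \lesssim q^{-\alpha}\|f\|_{\LMO}$ for every $q\ge 1$ and every $f\in\LMO$. The two lowest blocks $q=-1$ and $q=0$ are not covered by that estimate, but they are harmless: writing $\Delta_{-1}f=h\ast f$ and $\Delta_0 f=g\ast f$ with $g,h\in\mathcal S$, the rapid decay of the kernels together with the $L^1$-on-unit-balls part of the $\LMO$ norm (exactly the mechanism behind \eqref{eq:ea2}) gives $\|\Delta_{-1}f\|_{L^\infty}+\|\Delta_0 f\|_{L^\infty}\lesssim \|f\|_{\LMO}$. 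Collecting the blocks, I obtain
\[
\sum_{q=-1}^N \|\Delta_q f\|_{L^\infty} \lesssim \|f\|_{\LMO}\Big(1+\sum_{q=1}^N q^{-\alpha}\Big).
\]

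The only genuine computation left is the asymptotics of the sum $\sum_{q=1}^N q^{-\alpha}$, which I would estimate by comparison with $\int_1^N x^{-\alpha}\,dx$. For $\alpha=1$ this gives $\sum_{q=1}^N q^{-1}\lesssim \ln N$, hence $\sum_{q=-1}^N\|\Delta_q f\|_{L^\infty}\lesssim \|f\|_{\LMO}\ln N$ and $\Gamma(N)=\ln N$. For $\alpha\in(0,1)$ it gives $\sum_{q=1}^N q^{-\alpha}\lesssim \frac{N^{1-\alpha}}{1-\alpha}$, which absorbs the constant $1$ for $N\ge 1$, so $\sum_{q=-1}^N\|\Delta_q f\|_{L^\infty}\lesssim \|f\|_{\LMO}N^{1-\alpha}$ and $\Gamma(N)=N^{1-\alpha}$. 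Dividing by $\Gamma(N)$ and taking the supremum over $N$ then yields $\|f\|_{B_\Gamma}\lesssim \|f\|_{\LMO}$ in both regimes.

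Since the substantive analytic work is already contained in Proposition \ref{prop:delta}, I do not expect any real obstacle: the corollary is essentially a summation of the per-block estimate. The only points deserving a little care are the treatment of the two lowest-frequency blocks, which rely on the $L^1$ part of the $\LMO$ norm rather than on its oscillation part, and the normalization of $\Gamma$ near $N=1$ (where $\ln N$ degenerates); the latter is purely cosmetic and does not affect the embedding for large $N$, which is what \eqref{space-vis} actually measures.
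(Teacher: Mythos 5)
Your proposal is correct and is essentially the paper's own argument: the corollary is obtained exactly by summing the block estimate $\|\Delta_q f\|_{L^\infty}\lesssim (1+q)^{-\alpha}\|f\|_{\LMO}$ of Proposition \ref{prop:delta} and controlling the low-frequency block through the $L^1$-on-unit-balls part of the $\LMO$ norm, which is precisely the computation the paper performs when it estimates $\|S_nf\|_{L^\infty}$. Your observation that one needs the sum of the sup-norms rather than the sup-norm of the sum, and your handling of the degenerate normalization of $\ln N$ at $N=1$, are both sound and consistent with how the paper intends the corollary to be read.
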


\begin{proof}[Proof of Proposition \ref{prop:delta}]
$\bf{(1)}$ 
The Littlewood-Paley operator $\Delta_n$ corresponds to a convolution  by $2^{2n}g(2^n\cdot)$ with $g$ a smooth function such that its Fourier transform is compactly supported away from zero. Therefore using the cancellation property of $g$, namely, $\int_{\RR^2}g(x)dx=0,$ we obtain
\begin{eqnarray*}\Delta_nf(x) &=& \int_{\R^2} 2^{2n} g( 2^{n}(x-y)) f(y) dy \\
&=& \int_{\R^2} 2^{2n} g( 2^{n}(x-y)) \left[f(y)-\av_{B(x,2^{-n})} f \right] dy,
\end{eqnarray*}
 Denote by $B\triangleq B(x,2^{-n})$ the ball of center $x$ and radius $2^{-n}$. Hence, due to the fast decay of $g$, it comes for every integer $M$
\begin{align*}
 \left| \Delta_nf(x) \right| & \lesssim  |B|^{-1} \sum_{k=0}^{n-1} 2^{-kM} \int_{2^k B} \left|f(y)-\av_{B} f \right| dy \\
 &+ |B|^{-1} \sum_{k=-1}^\infty 2^{-(n+k)M} \int_{2^{n+k} B} \left|f(y)-\av_{B} f \right| dy\\
 &\triangleq \hbox{I+II}. 
\end{align*}
To estimate the first sum $I$ we use the first inequality of (\ref{eq:ea}),
\begin{align*}
\hbox{ I }&\leq  \sum_{k=0}^{n-1} 2^{-k(M-2)} \av_{2^k B} \left|f-\av_{B} f \right|  \\
 & \lesssim \sum_{k=0}^{n-1} 2^{-k(M-2)} \av_{2^k B} \left|f-\av_{2^k B} f \right|  + \sum_{k=0}^{n-1} 2^{-k(M-2)} \left|\av_{B} f-\av_{2^k B} f \right|\\
 & \lesssim  \sum_{k=0}^{n-1} 2^{-k(M-2)} (|\ln(2^{k-n})|)^{-\alpha}\|f\|_{\LMO} + \sum_{k=0}^{n} 2^{-k(M-2)}\sum_{\ell=0}^k\frac{1}{|n-\ell|^\alpha} \|f\|_{\LMO} \\
 &\lesssim \sum_{k=0}^{n-1} 2^{-k(M-2)} \frac{1}{|k-n|^\alpha}\|f\|_{\LMO} + \sum_{k=0}^{n-1} 2^{-k(M-2)} \sum_{\ell=0}^k\frac{1}{|n-\ell|^\alpha} \|f\|_{\LMO}\\
 & \lesssim (1+n)^{-\alpha} \|f\|_{\LMO}. 
\end{align*}
As to the second sum we combine  (\ref{eq:ea}) and (\ref{eq:ea2})
\begin{eqnarray*}
\av_{2^{n+k} B} \left|f-\av_{B} f \right| &\le&\av_{2^{n+k} B} \left|f-\av_{2^nB} f \right|+\av_{2^{n} B} \left|f-\av_{B} f \right|\\
&\lesssim&  \|f\|_{\LMO}+n^{1-\alpha}\|f\|_{\LMO}\\
&\lesssim& \|f\|_{\LMO} n^{1-\alpha}.
\end{eqnarray*}
Consequently,
\begin{align*}
 \hbox{II} & \leq  \|f\|_{\LMO} \sum_{k\geq -1} 2^{-(n+k)(M-2)}n^{1-\alpha}\\
 & \lesssim n^{-\alpha} \|f\|_{\LMO}.
\end{align*}
The proof is now achieved  by combining these two estimates. 

Now let us focus on the estimate of  $S_n f$. We write according to the first estimate $(1)$ of the proposition
\begin{align*}
\|S_n f\|_{L^\infty}&\le \|\Delta_{-1}f\|_{L^\infty}+\sum_{q=0}^{n-1}\|\Delta_q f\|_{L^\infty}\\
&\lesssim  \|\Delta_{-1}f\|_{L^\infty}+\sum_{q=0}^{n-1}\frac{1}{ (1+q)^\alpha}\|f\|_{\LMO}\\
&\lesssim  \|\Delta_{-1}f\|_{L^\infty}+ n^{1-\alpha}\|f\|_{\LMO}.
\end{align*}
So it remains to estimate the low frequency part. For this purpose we  imitate the proof of $\|\Delta_n f\|_{L^\infty}$ with the following slight modification 
\begin{eqnarray*}
\Delta_{-1}(f)(x) &=& \int_{\R^2}  h( x-y) f(y) dy\\
& =& \int_{\R^2} h(x-y) \Big(f(y)-\av_{B(x,1)} f \Big)dy+ \av_{B(x,1)} f.
\end{eqnarray*}
Therefore we get
\begin{eqnarray*}
\|\Delta_{-1}f\|_{L^\infty}&\lesssim&\|f\|_{\LMO}+\sup_{x\in\mathbb{R}^2} \av_{B(x,1)} |f| \\
&\lesssim&\|f\|_{\LMO}.
\end{eqnarray*}

${\bf(2)}$ This can be  easily obtained by combining the first part of Proposition  \ref{prop:delta}  with the continuity on the $L^p$ space of  the localized Riesz transforms $\Delta_n \partial_i\partial_j\Delta^{-1}$  together with the help  of Bernstein inequality, for $n\geq 1$: 
\begin{eqnarray*}
\|S_n \mathcal{R}_{ij}f\|_{L^\infty}&\lesssim& \|\Delta_{-1}\mathcal{R}_{ij}f\|_{L^\infty}+\sum_{q=0}^{n-1}\|\Delta_q f\|_{L^\infty}\\
&\lesssim&  \|\mathcal{R}_{ij} f\|_{L^p}+\sum_{q=0}^{n-1}\frac{1}{ (1+q)^\alpha}\|f\|_{\LMO}\\
&\lesssim&  \|f\|_{L^p}+ n^{1-\alpha}\|f\|_{\LMO}.
\end{eqnarray*}
The proof of the desired result is now completed.
\end{proof}

Now we will introduce closed subspaces of the space  $\baa$ which play a crucial role in the study of Euler equations as we will see later  in the concerned section.

\subsection{The $\ba$ space}
It seems that the establishment  of the local well-posedness for Euler equations in the framework of $\baa$ spaces is quite difficult  and cannot be easily  reached by the usual methods. What we are able to do here is to construct the solutions in some weighted $\baa$ spaces  whose study will be  the subject of this section. 

Before stating the definition of these spaces we need the following concepts.
\begin{Defin}\label{def657}
Let $F:[1,+\infty[\to[0,+\infty[$ be a non-decreasing continuous function. 

$\bullet$ We say that $F$ belongs to the class $\mathcal{A}$ if there exists $C>0$ such that:
\begin{enumerate}
\item  Divergence at infinity: $\displaystyle{\lim_{x\to+\infty}F(x)=+\infty.}$
\item Slow growth: $\forall x,y\geq 1$
$$
F(x\,y)\le C\, (1+F(x))\,(1+F(y)).
$$
\item Lipschitz condition: $F$ is differentiable and 
$$
\sup_{x>1}|F^\prime(x)|\le C.
$$
\item Cancellation at $1$:
$$
\forall x\in[0, 1],\quad F(1+x)\leq C x.
$$
\end{enumerate}
$\bullet$ We say that $F$ belongs to the class $\mathcal{A}^\prime$ if it belongs to $\mathcal{A}$ and satisfies 
$$
\int_{2}^{+\infty}\frac{1}{x \,F(x)}dx=+\infty.
$$

\end{Defin}

\begin{rema}\label{rmq23}
\begin{enumerate}
\item From the slow growth assumption we see that necessarily the function $F$ should have at most a polynomial growth.
\item The assumption $(3)$ is only used through  Lemma $\ref{maj12}$ and could be  in fact relaxed for example  to $\|F^{(k)}\|_{L^\infty}<\infty$ for some $k\in \NN.$ But for the sake of simple presentation we limited our discussion to the case $k=1.$ 
\end{enumerate}
\end{rema}

\begin{ex}
\begin{enumerate}
\item For any $\beta\in ]0,1]$, the function $x\mapsto x^\beta -1$ belongs to the class $\mathcal{A} \setminus {\mathcal A}^\prime.$
\item For any $\beta\geq1$, the function $x\mapsto \ln^{\beta}(x)$ belongs to the class $\mathcal{A}$ and this function belongs to the class $\mathcal{A}^\prime$ only for $\beta=1.$
\item The function   $x\mapsto \ln x\,\ln\ln(e+x)$ belongs to the class $\mathcal{A}^\prime$.
\end{enumerate}
\end{ex}

We can now introduce the weighted $\baa$ spaces.

\begin{Defin}
Let $\alpha \in[0,1]$ and $F$ be in the class $\mathcal{A}$.  We define the space $\ba$ as the set of locally integrable functions $f:\RR^2\to\RR$ such that
$$
\|f\|_{\ba}\triangleq\|f\|_{\LMO}+\sup_{B_1, B_2}\frac{|\av_{B_2}f-\av_{B_1}f|}{ F\left(\frac{|\ln(r_2)|}{|\ln(r_1)|}\right)}<+\infty,
$$
where the supremum is taken over all the pairs of balls  \mbox{$B_2(x_2,r_2)$}   and  \mbox{$B_1(x_1,r_1)$}  in  \mbox{$\R^2$}   with  \mbox{$0<r_1\leq \frac12$}  and  \mbox{$2B_2\subset B_1$}.
Here, for a ball  \mbox{$B$}  and  \mbox{$\lambda>0$},  \mbox{$\lambda B$}  denotes the ball that is concentric with  \mbox{$B$}  and whose radius is  \mbox{$\lambda$}  times the radius of  \mbox{$B$}.
\end{Defin}

Now we list  some useful properties of these spaces that will be used later.
\begin{rema}\label{rmq67}
\begin{enumerate}
 \item The space $\lb$ introduced in \cite{BK2} corresponds to $\alpha=0$ and $F=\ln$. 
 \item Let $F_1, F_2\in \mathcal{A}$ such that $F_1\lesssim F_2$. Then we have the embedding
 $$
 \mathit{L^\alpha mo}_{F_1}\hookrightarrow  \mathit{L^\alpha mo}_{F_2}.
 $$
 \item For every  \mbox{$g\in \mathcal C^\infty_0(\R^2)$}  and  \mbox{$f\in \ba$}  one has
\begin{equation*}
\| g\ast f\|_{\ba}\leq \|g\|_{L^1}\|  f\|_{\ba}.
 \end{equation*}
Indeed, this property is just the consequence of Minkowski inequality and that the $\ba$-norm is invariant by translation.

 \end{enumerate}

 \end{rema}

The main goal of the following proposition is to discuss the link between the space of bounded functions and the space $\ba$. We will see in particular that under suitable assumptions on $F$ these spaces  are not comparable. More precisely we get the following.

\begin{Prop} 
\label{pro3}
Let $\alpha\in [0,1]$ and $f:\R^2\to\R$ be the radial function defined by
\begin{equation*}
 f(x)=\left\{ 
 \begin{array}{ll} \ln(1-\ln|x|) \qquad {\rm if}\quad |x|\leq 1\\
 0,\qquad \qquad {\rm if}\quad  |x|\geq 1.
 \end{array} \right.    
      \end{equation*}
The  following properties hold true.\\
\begin{enumerate}
\item The function $f$ belongs to $\baa$.
\item  For $F(x)= \ln x, x\geq 1,$ then $f\in \ba$.
\item For $\alpha\in ]0,1]$ and $F\in \mathcal{A}$ with $\ln\lesssim F$, the spaces $L^\infty$ and $\ba$ are not comparable.
%\item The space  \mbox{$\baa$}  is a Banach space included in  \mbox{${\rm BMO}$}  and strictly containing  \mbox{$L^\infty(\R^2)$}.\\
\end{enumerate}
\end{Prop}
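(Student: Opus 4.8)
The plan is to establish the three assertions in turn, the quantitative heart of the matter being the \emph{sharp} mean-oscillation bound
\[
\av_B\bigl|f-\av_B f\bigr|\lesssim \frac{1}{1+|\ln r|}\qquad\text{for every ball }B\text{ of radius }r\le\tfrac12 .
\]
This single estimate gives assertion (1) for \emph{all} $\alpha\in[0,1]$ at once, since $|\ln r|^\alpha(1+|\ln r|)^{-1}\lesssim1$, and it is also the engine driving assertion (2). To prove it I would split according to the position of $B=B(x_0,r)$ relative to the singularity at $0$. If $|x_0|>2r$ the ball avoids the origin, $f$ is Lipschitz on $B$ with $|\nabla f(x)|=\bigl(|x|(1+|\ln|x||)\bigr)^{-1}$ for $|x|\le1$; since $\rho\mapsto\rho(1-\ln\rho)$ is increasing on $(0,1)$ the gradient is largest at the point of $B$ nearest the origin, and a mean-value estimate together with $|x_0|-r>r$ yields oscillation $\lesssim(1+|\ln r|)^{-1}$. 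If $|x_0|\le2r$ the ball is near the origin, $B\subset B(0,3r)$, and I would compare $f$ with the constant value $\ln(1-\ln(3r))$ it takes on $|x|=3r$; as $f$ is radially non-increasing it dominates this constant on $B(0,3r)$, and an integration by parts in polar coordinates gives $\av_{B(0,3r)}\bigl(f-\ln(1-\ln(3r))\bigr)\lesssim(1+|\ln(3r)|)^{-1}$, hence the claimed bound for $B$. The remaining $L^1$-part $\sup_{|B|=1}\int_B|f|\le\|f\|_{L^1}$ is finite because the substitution $\rho=e^{-s}$ turns $\int_{\R^2}|f|$ into $\int_0^\infty\ln(1+s)e^{-2s}\,ds<\infty$.

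For assertion (2), with $F=\ln$, only the second seminorm needs attention. Given balls with $2B_2\subset B_1$—so that $r_2\le r_1/2$ and, writing $\lambda_i=|\ln r_i|$, one has $\lambda_2\ge\lambda_1+\ln2$—I would telescope along the concentric dilations $B_2^{(j)}=B(x_2,2^jr_2)$ up to the largest $k$ with $2^kr_2\le r_1$. Each step is controlled by the sharp bound, $|\av_{B_2^{(j)}}f-\av_{B_2^{(j+1)}}f|\lesssim(1+\lambda_2-j\ln2)^{-1}$, and comparing the resulting series with $\int dt/(1+t)$ produces precisely a logarithm,
\[
\Bigl|\av_{B_2}f-\av_{B_2^{(k)}}f\Bigr|\lesssim \ln\frac{1+\lambda_2}{1+\lambda_1}\le\ln\frac{\lambda_2}{\lambda_1}=F\!\left(\frac{\lambda_2}{\lambda_1}\right),
\]
the last inequality using $\lambda_1\le\lambda_2$. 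It then remains to compare the two comparable balls $B_2^{(k)}$ and $B_1$: both sit inside $\hat B=B(x_2,2r_1)$ with bounded volume ratios, so $|\av_{B_2^{(k)}}f-\av_{B_1}f|\lesssim\av_{\hat B}|f-\av_{\hat B}f|\lesssim(1+\lambda_1)^{-1}$, and $(1+\lambda_1)^{-1}\lesssim\ln(\lambda_2/\lambda_1)$ because $\lambda_2\ge\lambda_1+\ln2$. \textbf{This matching of the logarithm built into $f$ with the logarithm $F=\ln$ is the crux of the proof and the step I expect to be the main obstacle}; everything hinges on using the sharp $\alpha=1$ oscillation rather than the generic $\baa$ bound, which would only give $\lambda_2^{1-\alpha}-\lambda_1^{1-\alpha}$ and is far too lossy.

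Finally, assertion (3) asks for non-inclusion in both directions, and here (1)–(2) do most of the work. That $\ba\not\subset L^\infty$ is witnessed by $f$ itself: by (1)–(2) it belongs to the space attached to $F_0=\ln$, and since $\ln\lesssim F$ the embedding of Remark~\ref{rmq67}(2) places it in $\ba$, whereas $f(x)=\ln(1-\ln|x|)\to+\infty$ as $x\to0$ shows $f\notin L^\infty$. For the reverse inclusion $L^\infty\not\subset\ba$ I would exploit that, because $\alpha>0$, membership in $\baa$ forces the mean oscillation to decay like $|\ln r|^{-\alpha}$; this fails for a bounded function oscillating at every scale, for instance $b(x)=\cos(\ln|x|)$, whose oscillation on $B(0,r)$ stays bounded below by a positive constant as $r\to0$. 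Consequently $|\ln r|^\alpha\,\av_{B(0,r)}\bigl|b-\av_{B(0,r)}b\bigr|\to\infty$, so $b\in L^\infty\setminus\baa\subset L^\infty\setminus\ba$, which is impossible if the two spaces were comparable.
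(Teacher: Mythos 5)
Your proof is correct, and it follows the same overall skeleton as the paper's (a sharp oscillation bound implies (1) and (2); then $f$ together with a bounded oscillating function gives (3)), but it differs at each step in a way worth recording. For (1), the paper obtains the sharp decay $\av_{B}|f-\av_{B}f|\lesssim (1+|\ln r|)^{-1}$ in one stroke from the Poincar\'e inequality $\av_{B}|f-\av_{B}f|\lesssim r\,\av_{B}|\nabla f|$ together with the remark that the right-hand side is worst for balls centered at the origin (alternatively from Spanne's criterion); your two-case analysis (ball avoiding the origin, via the monotonicity of $\rho\mapsto\rho(1-\ln\rho)$, versus ball near the origin, via radial monotonicity and polar integration) proves the same bound with all details made explicit. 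For (2) the divergence is more substantial: the paper does not reprove the key inequality but imports estimate \eqref{eq:end} from \cite[Proposition 3]{BK2} and only performs the dichotomy on $A=\frac{1+|\ln r_2|}{1+|\ln r_1|}\gtrless 2$, whereas your dyadic telescoping along the balls $B(x_2,2^jr_2)$ reconstructs that estimate from scratch out of the sharp oscillation bound; this buys a self-contained proof, and your endgame (absorbing the comparison error $(1+|\ln r_1|)^{-1}$ into $\ln\big(\tfrac{|\ln r_2|}{|\ln r_1|}\big)$ using $r_2\le r_1/2$) is exactly the paper's case $A\le 2$. For (3) the structure coincides ($f\in\ba\setminus L^\infty$ via part (2) and the monotonicity of Remark~\ref{rmq67}; a bounded function outside $\baa$ for the converse), but the counterexamples differ: the paper takes $g={\bf 1}_{D_+}$, for which $\av_{B_r}|g-\av_{B_r}g|=\frac12$ exactly on every ball centered at $0$, while you take $b(x)=\cos(\ln|x|)$ and assert, without proof, that its oscillation on $B(0,r)$ stays bounded below as $r\to 0$. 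That assertion is true --- the outermost annulus $\{e^{-2\pi}r\le|x|\le r\}$ occupies a fixed fraction of $|B(0,r)|$, and inside it each of the sets $\{b\ge \tfrac12\}$ and $\{b\le-\tfrac12\}$ has measure $\gtrsim r^2$, so $\av_{B(0,r)}|b-c|\gtrsim 1$ for every constant $c$ --- but it is the one claim in your write-up that still requires a few lines of verification, whereas the paper's half-disc example makes it an immediate exact computation.
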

 \begin{proof}
 ${\bf{(1)}}$ There are at least two ways to get this result. The first one uses Spanne's criterion, see Theorem 2 of \cite{Spanne} and we omit here the details. However the second one is related  to  Poincar\'e inequality which states that for any ball for $B$ we have
$$ \av_{B} \left|f-\av_{B}f \right| \lesssim r\, \av_{B} |\nabla f|.
$$
For the example, it is obvious that  $|\nabla f(x)| \lesssim \frac{1}{|x|(1-\ln |x|)}\cdot$ So the quantity of the right-hand side in the Poincar\'e inequality is maximal for a ball $B$ centered at $0$ and consequently it comes
\begin{eqnarray*}
 r \,\av_{B} |\nabla f| &\lesssim &r^{-1} \,\int_0^{r} \frac{1}{1-\ln \eta} d\eta\\
 &\lesssim &\frac{1}{1-\ln r},
 \end{eqnarray*}
which concludes the proof of $f\in \baa$. \\
 ${\bf{(2)}}$ We reproduce the  arguments developed in \cite[Proposition 3]{BK2}, where it is proven that
\begin{equation} \left|\av_{B_2}f-\av_{B_1}f\right| \lesssim  \ln\left(\frac{1+|\ln(r_2)|}{1+|\ln(r_1)|}\right) + \mathcal{O}( |\ln(r_1)|^{-1}) + \mathcal{O}( |\ln(r_2)|^{-1}). \label{eq:end} \end{equation}
If $A\triangleq\frac{1+|\ln(r_2)|}{1+|\ln(r_1)|} \geq 2$ then $\mathcal{O}( |\ln(r_1)|^{-1}) + \mathcal{O}( |\ln(r_2)|^{-1})$ is bounded by $\ln(A)$ and so 
$$ \left|\av_{B_2}f-\av_{B_1}f\right| \lesssim \ln(A).$$
If $A\leq 2$, then $\ln(A)$ is equivalent to $A-1=\frac{\ln(r_1/r_2)}{1+|\ln(r_1)|} \gtrsim (1+|\ln(r_1)|)^{-1}.$ The latter inequality follows from the fact  $r_2\leq r_1/2$. Therefore we get 
\begin{eqnarray*}
 \mathcal{O}( |\ln(r_1)|^{-1}) + \mathcal{O}( |\ln(r_2)|^{-1}) \lesssim A-1 \approx \ln(A), 
 \end{eqnarray*}
which also gives 
$$ \left|\av_{B_2}f-\av_{B_1}f\right| \lesssim \ln(A).$$
Finally this ensures that $f\in \ba$, for every $\alpha \in[0,1]$.

 ${\bf{(3)}}$ According to Remark \ref{rmq67} we get the embedding $\mathit{L^\alpha mo}_{\ln}\hookrightarrow \ba$. Now by virtue of  the second claim of Proposition \ref{pro3} the function $f$ which is clearly  not bounded belongs to the \mbox{space $\ba$.} It remains to construct a function which is bounded but does not belong to the space $\ba.$
 Let $D_+$ be the upper half unit disc defined by 
 $$
 D_+\triangleq\big\{(x,y); x^2+y^2\le1, y\geq0  \big\}.
 $$
 By the same way we define the lower half unit disc $D_{-}.$ Let $r\le 1,$ denote by $B_r$  the disc of center zero and radius $r$ and let $g={\bf{1}}_{D_+}$ be  the characteristic function of $D_+.$ Easy computations yield
 \begin{equation*}
g(x)-\av_{B_{r}}g=
 \left\{ 
\begin{array}{ll} 
\frac12,\quad x\in B_r\cap D_+\\
-\frac12,\quad x\in B_r\cap D_{-}
\end{array} \right.    
     \end{equation*}
     Thus  we find for every $r\in(0,1)$
 \begin{eqnarray*}
 \av_{B_r}|g-\av_{B_r}g|&=&\frac12\cdot
 \end{eqnarray*}
 This shows that the function $g$ does not belong to $\baa$ for every $\alpha>0.$
 \end{proof}
%%
%%%%
%%
%%%
Our next aim is to go over some refined  properties  of the weighted {\it{lmo}} spaces. One result that we will proved and  which seems to be surprising says that all the spaces $\ba$ are contained in the space $\it{L^\alpha mo}_{\ln}$. This  rigidity   follows from the cancellation property  of $F$ at the point $1$. More precisely, we shall show the following.
\begin{Prop} \label{prop:F} Let $\alpha \in(0,1]$ and $F\in {\mathcal A}$. Then $\ba \hookrightarrow \baa_{\ln}$.

Let $F:[1,+\infty)\to \RR_+$ defined by  $F(x)=\ln(1+\ln x)$ then $F\in{\mathcal A}$ and $\ba \subsetneq \baa_{\ln}$.
 \end{Prop}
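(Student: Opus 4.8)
The plan is to prove the embedding $\ba \hookrightarrow \baa_{\ln}$ by showing that the second part of the $\baa_{\ln}$-norm is controlled by the $\ba$-norm. Since both spaces share the same first term $\|\cdot\|_{\LMO}$, it suffices to bound, for any admissible pair of balls $B_2 \subset \frac12 B_1$ with $0<r_1\le\frac12$ and $2r_2\le r_1$, the quantity $|\av_{B_2}f - \av_{B_1}f|$ by a constant times $\|f\|_{\ba}\,\ln\!\big(\tfrac{|\ln r_2|}{|\ln r_1|}\big)$. The idea is that the cancellation property $(4)$ of the class $\mathcal A$ forces $F$ to be comparable to the identity near $1$, i.e. $F(1+x)\le Cx$ for small $x$, and this is exactly what lets a function controlled by $F$ be controlled by $\ln$ (which is itself $\approx x-1$ near $1$).

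The main step would be a dyadic decomposition in the ratio of the logarithmic radii. First I would interpolate a chain of balls between $B_2$ and $B_1$ by doubling the radius step by step: set $B^{(0)}=B_2$ and $B^{(k)}=2^k B_2$ until the radius reaches roughly $r_1$. Along each consecutive pair the ratio $\frac{|\ln(r/2)|}{|\ln r|}=1+\frac{\ln 2}{|\ln r|}$ is close to $1$, so by property $(4)$ we get $F\big(\tfrac{|\ln(r/2)|}{|\ln r|}\big)\lesssim \tfrac{1}{|\ln r|}$, hence each telescoping increment obeys
\begin{equation*}
\big|\av_{2^{k}B_2}f - \av_{2^{k+1}B_2}f\big| \lesssim \|f\|_{\ba}\,\frac{1}{|\ln(2^k r_2)|}.
\end{equation*}
Summing the harmonic-type series $\sum_k \frac{1}{|\ln r_2| - k\ln 2}$ from $k=0$ up to the index where $2^k r_2 \approx r_1$ produces a logarithm, giving a bound of the form $\|f\|_{\ba}\,\ln\!\big(\tfrac{|\ln r_2|}{|\ln r_1|}\big)$. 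This is precisely the $\ln$-type control we want, and it establishes the embedding.

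For the strictness, the natural candidate is built from the function $F(x)=\ln(1+\ln x)$. I would first check this $F$ lies in $\mathcal A$: divergence at infinity is clear, the slow-growth and Lipschitz conditions follow from elementary estimates on $\ln\ln$, and the cancellation at $1$ holds because $\ln(1+\ln(1+x))\approx \ln(1+x)\approx x$ for small $x$. Since $F\lesssim \ln$ this $F$ gives a strictly larger space than $\baa_{\ln}$ would if the inclusion were an equality, so the task reduces to exhibiting an explicit function in $\baa_{\ln}$ that fails to be in $\mathit{L^\alpha mo}_{F}$ with this smaller $F$ — or, reading the statement as written, a function in $\ba$ (for this $F$) that is not in $\baa_{\ln}$; one constructs a radial profile whose successive dyadic averages $\av_{2^k B}f$ grow like $\ln(1+|\ln r|)$ (e.g. $f(x)\sim \ln(1+\ln(1-\ln|x|))$ near the origin, iterating the logarithm once more than the example in Proposition \ref{pro3}), so that the difference of averages over balls with logarithmic-radius ratio $A$ behaves like $F(A)=\ln(1+\ln A)$ but not like $\ln A$, separating the two spaces.

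The hard part will be the telescoping sum estimate: one must be careful that property $(4)$ is only a one-sided bound near $1$ and that the ratios $\frac{|\ln(2^k r_2)|}{|\ln r_1|}$ stay in the regime where $(4)$ applies, while simultaneously keeping track of the $\LMO$ contribution coming from comparing $\av_{2^k B}f$ to $\av_{2^{k+1}B}f$ (this is where inequality \eqref{eq:ea} feeds in). Verifying the membership of the separating example in $\ba$ for the doubly-logarithmic $F$, and its failure in $\baa_{\ln}$, is a routine but delicate averaging computation along the same lines as the proof of Proposition \ref{pro3}.
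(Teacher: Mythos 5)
Your proof of the embedding $\ba \hookrightarrow \baa_{\ln}$ follows essentially the paper's own route: a telescoping chain of concentric dyadic balls, the cancellation property $(4)$ of ${\mathcal A}$ turning each increment into a bound $\lesssim \|f\|_{\ba}\,|\ln(2^k r_2)|^{-1}$, and the harmonic-type sum producing $\ln\big(\tfrac{|\ln r_2|}{|\ln r_1|}\big)$. Two steps you leave open, which the paper handles explicitly: (i) your chain ends at a ball of radius $\approx r_1$ \emph{centered at $x_2$}, not at $B_1$ (centered at $x_1$); the paper compares these two comparable balls by enclosing both in an auxiliary ball of radius $5r_1$ and using the $F$-part of the norm plus cancellation, producing an extra error $\lesssim |\ln r_1|^{-1}$; (ii) that error must then be absorbed into the target bound via the lower bound $|\ln r_1|^{-1}\lesssim \ln\big(\tfrac{\ln r_2}{\ln r_1}\big)$, which holds precisely because $r_2\leq r_1/2$. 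These are fixable omissions, not wrong ideas.

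The genuine gap is in the strictness part, where your construction goes in the wrong direction. To prove $\ba \subsetneq \baa_{\ln}$ for $F(x)=\ln(1+\ln x)$ you must exhibit a function in $\baa_{\ln}\setminus \ba$, i.e.\ one whose differences of averages over balls with logarithmic-radius ratio $A$ are $O(\ln A)$ but \emph{not} $O(\ln(1+\ln A))$. Your candidate $f(x)\sim \ln(1+\ln(1-\ln|x|))$ does the opposite: its averages over balls $B_r$ centered at the origin grow like $\ln(1+\ln(1+|\ln r|))$, so differences of averages behave like $\ln(1+\ln A)=F(A)$; such a function \emph{satisfies} the $F$-bound, hence lies in $\ba$ (and a fortiori in $\baa_{\ln}$), and separates nothing --- satisfying the smaller bound $F\lesssim \ln$ is the \emph{stronger} condition, not the weaker one. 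The correct separating example is the single-log function of Proposition \ref{pro3} itself, $f(x)=\ln(1-\ln|x|)$: by \eqref{eq:end}, which becomes an equality for balls centered at $0$, its differences of averages grow like $\ln A$, which is not $O(\ln(1+\ln A))$, so $f\notin \ba$, while Proposition \ref{pro3} already gives $f\in \baa_{\ln}$. Finally, your alternative ``reading'' of the statement --- seeking a function in $\ba$ that is not in $\baa_{\ln}$ --- is not a possible reading: it would contradict the embedding you just proved; the symbol $\subsetneq$ asserts that the inclusion $\ba\subset\baa_{\ln}$ is strict.
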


\begin{proof}
Fix a function $f\in \ba$ and a point $x$ and set  $\phi(r) = \av_{B(x,r)} f$. From the definition of the space $\ba$ combined with the cancellation property of $F$ and its polynomial growth we get that for every $r\in(0,\frac{1}{2})$ and $k\geq 1$
\begin{align*}
 \left| \phi(r)-\phi(2^{-k} r) \right| & \leq \sum_{\ell = 0}^{k-1} \left| \phi(2^{-\ell} r)-\phi(2^{-\ell-1}r) \right| \\
 & \lesssim \sum_{\ell = 0}^{k-1} F\left(\frac{1+\ell + |\ln r|}{\ell +|\ln r|}\right) \\
 & \lesssim \sum_{\ell = 0}^{k-1} \frac{1}{\ell +|\ln r|}\\
 & \lesssim \ln\left(\frac{k + |\ln r|}{|\ln r|}\right).
\end{align*}
Then for $s<\frac{r}{2}$ choose $k\geq 1$ such that $2^{-k-1} r \leq s <2^{-k} r$ and so
$$ \left| \phi(r)-\phi(s) \right| \leq \left| \phi(r)-\phi(2^{-k} r) \right| + \left| \phi(s)-\phi(2^{-k} r) \right|.$$
As we have just seen, the first term is bounded by
$$ \ln\left(\frac{k + |\ln r|}{|\ln r|}\right) \approx \ln\left(\frac{1+|\ln s|}{|\ln r|}\right).$$
The second term is bounded as follows :
\begin{align*}
\left| \phi(s)-\phi(2^{-k} r) \right| & \leq \left| \phi(s)-\phi(2^{-k+1} r) \right| + \left| \phi(2^{-k-1}r)-\phi(2^{-k} r) \right| \\
 & \lesssim F \left(\frac{\ln s}{\ln(2^{-k+1}r)}\right) + F \left(\frac{1+k + |\ln(r)|}{k +|\ln(r)|}\right) \\
 & \lesssim \frac{1}{|\ln s|}\\
 & \lesssim \ln\left(\frac{|\ln s|}{|\ln r|}\right),
\end{align*}
where we used the cancellation property of $F$ and the fact that both $B(x,2s)$ and $B(x,2.2^{-k}r)$ are included into $B(x,2^{-k+1}r)$.
So combining these two previous estimates, it comes for every $x$, $r<\frac{1}{2}$ and $s\leq \frac{r}{2}$ 
\begin{equation} \left| \av_{B(x,r)} f - \av_{B(x,s)} f\right| \lesssim \ln\left(\frac{|\ln s|}{|\ln r|}\right). \label{eq:boule} \end{equation}
Now let \mbox{$B_2=B(x_2,r_2)$} and  \mbox{$B_1=B(x_1,r_1)$} two balls  with  \mbox{$0<r_1\leq \frac12$}  and  \mbox{$2B_2\subset B_1$}. We wish   to estimate $\left| \av_{B_2} f - \av_{B_1} f \right|$. First, it is clear  that  the interesting case  is  when at least the radius  $r_2$ is small, otherwise $r_1$ and $r_2$ are equivalent to $1$ and there is nothing to prove. So assume that $r_2\le  \frac{1}{100}$, then it is only sufficient to study the case where $r_1\leq \frac{1}{10}$. So let us only consider this situation : $r_2 \leq \frac{1}{100}$ and $r_1\leq \frac{1}{10}$. \\
Then we have
\begin{align*}
\left| \av_{B_2} f - \av_{B_1} f \right| & \lesssim  \left| \av_{B_2} f - \av_{\frac{r_1}{r_2} B_2} f \right| + \left| \av_{\frac{r_1}{r_2} B_2} f - \av_{B_1} f \right|.
\end{align*}
Applying (\ref{eq:boule}), the first term is bounded by  $\ln\left(\frac{\ln r_2}{\ln r_1} \right)$.
The second term can be easily bounded by $|\ln(r_1)|^{-1}$. Indeed, the two balls $\frac{r_1}{r_2} B_2$ and $B_1$ are comparable and of radius $r_1\leq \frac{1}{10}$. So there exists a ball $B$ of radius $r=5 r_1$, such that $\frac{2r_1}{r_2} B_2 \cup 2B_1 \subset B$.
Then we have
\begin{align*}
\left| \av_{\frac{r_1}{r_2} B_2} f - \av_{B_1} f \right| & \leq \left| \av_{\frac{r_1}{r_2} B_2} f - \av_{B} f \right| + \left| \av_{B} f - \av_{B_1} f \right| \\
 & \lesssim F\left(\frac{\ln r_1}{\ln r} \right)  \lesssim \frac{1}{|\ln r_1|}.  
\end{align*}
 Now, since $r_2\le\frac12 r_1$ then
\begin{eqnarray*}
\ln\left(\frac{\ln r_2}{\ln r_1}\right) & = & \ln\left(1+\frac{\ln(r_2/r_1)}{\ln r_1}\right)\\
&\geq&\ln\left(1+\frac{\ln 2}{|\ln r_1|}\right)\\
&\ge&C\frac{1}{|\ln r_1|}\cdot
\end{eqnarray*}
This concludes the proof of 
$$ \left| \av_{B_2} f - \av_{B_1} f \right| \lesssim \ln\left(\frac{1-\ln(r_2)}{1-\ln(r_1)}\right).$$
Hence we get  the inclusion $\ba \subset \baa_{\ln}$. \\
Then consider the specific function $F(\cdot)=\ln(1+\ln(|\cdot|))$. It is easy to check that $F\in{\mathcal A}$ and the function $f$ defined in Proposition \ref{pro3} belongs to $\baa_{\ln} \setminus \ba$. Indeed, (\ref{eq:end}) becomes an equality for this specific function $f$ with balls $B_1,B_2$ centered at $0$.
\end{proof}

The next proposition shows that for $\alpha=1$, the cancellation property of $F$ at $1$ (in $\ba$ space) can be ``forgotten'', since it is already encoded in the ${\it Lmo}$ space:

\begin{Prop} Let $\alpha =1$ and $F\in {\mathcal A}$. Then $\ba = {\it Lmo}_{1+F}$. Moreover, we have ${\it Lmo}= {\it Lmo}_{\ln}$.
\end{Prop}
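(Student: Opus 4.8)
The plan is to dispatch the ``Moreover'' assertion $\baa=\baa_{\ln}$ first, since it supplies the only quantitative input needed for the main equality. One inclusion is free: the $\baa$-seminorm is literally one of the two terms defining the $\baa_{\ln}$-norm, so $\baa_{\ln}\hookrightarrow\baa$ with $\|f\|_{\baa}\le\|f\|_{\baa_{\ln}}$. For the reverse I would show, for every $f\in\baa$ (here $\alpha=1$), the oscillation bound $\left|\av_{B_2}f-\av_{B_1}f\right|\lesssim \|f\|_{\baa}\,\ln\big(|\ln r_2|/|\ln r_1|\big)$ over all admissible pairs $2B_2\subset B_1$, which is exactly $f\in\baa_{\ln}$. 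This is obtained by running the telescoping of Proposition \ref{prop:F} verbatim, except that each dyadic increment of $\phi(\rho):=\av_{B(x,\rho)}f$ is now controlled \emph{directly} by the $\baa$-seminorm with $\alpha=1$: since $\av_{B(x,\rho/2)}|f-\av_{B(x,\rho)}f|\lesssim \av_{B(x,\rho)}|f-\av_{B(x,\rho)}f|\lesssim \|f\|_{\baa}\,|\ln\rho|^{-1}$, one gets the same per-step size $\lesssim(\ell+|\ln r|)^{-1}$ used there. Summation produces the concentric estimate (\ref{eq:boule}), and the reduction of a general pair to a concentric part plus two comparable balls of radius $\approx r_1$ (again as in Proposition \ref{prop:F}, the comparable term costing $\lesssim|\ln r_1|^{-1}\lesssim\ln(\text{ratio})$) yields the claim. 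The value $\alpha=1$ is what makes $\sum_\ell(\ell+|\ln r|)^{-\alpha}$ sum to a logarithm rather than to a power, so the $\ln$-weight is exactly ``encoded'' in the $\baa$-seminorm.

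For the main equality $\ba=\bas$, the inclusion $\ba\hookrightarrow\bas$ is immediate from $F\le 1+F$, which gives $\|f\|_{\bas}\le\|f\|_{\ba}$. The content is $\bas\hookrightarrow\ba$. Fix $f\in\bas$; then $\|f\|_{\baa}\le\|f\|_{\bas}$, so by the first part $f\in\baa_{\ln}$ with $\left|\av_{B_2}f-\av_{B_1}f\right|\lesssim\|f\|_{\bas}\,\ln t$, where $t:=|\ln r_2|/|\ln r_1|>1$ is the ratio attached to an admissible pair. I would then bound the $F$-weighted quotient $\left|\av_{B_2}f-\av_{B_1}f\right|/F(t)$ by splitting on the size of $t$. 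As $F$ is nondecreasing and diverges, fix $t_0$ with $F(t_0)\ge 1$. For $t\ge t_0$ one has $1+F(t)\le 2F(t)$, hence $\left|\av_{B_2}f-\av_{B_1}f\right|/F(t)\lesssim \left|\av_{B_2}f-\av_{B_1}f\right|/(1+F(t))\le\|f\|_{\bas}$, using only membership in $\bas$.

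The decisive range is $t\in(1,t_0)$, i.e.\ pairs whose logarithmic radii are close, and this is where I expect the real obstacle to sit. Here $\bas$ gives essentially nothing ($1+F(t)\approx1$), so everything must come from the $\baa_{\ln}$ bound $\left|\av_{B_2}f-\av_{B_1}f\right|\lesssim\ln t$, and closing the estimate requires comparing $\ln t$ with $F(t)$ as $t\downarrow1$ and showing $\ln t\lesssim F(t)$ on $(1,t_0)$. Since $\ln t\approx t-1$ there, this is a statement about the first-order behaviour of $F$ at $1$, and it is exactly at this point that the structure of the class $\mathcal{A}$ in Definition \ref{def657} (the cancellation at $1$, together with $\alpha=1$) must be used in full; by contrast the large-$t$ regime and the entire ``Moreover'' statement are routine telescoping. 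Granting the near-$1$ comparison, the two ranges combine to $\|f\|_{\ba}\lesssim\|f\|_{\bas}$, which gives $\bas\hookrightarrow\ba$ and hence $\ba=\bas$.
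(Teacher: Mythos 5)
Your proposal follows the same route as the paper's proof: the trivial inclusion from $F\le 1+F$, the ``Moreover'' part via the telescoping of Proposition \ref{prop:F} with each dyadic step bounded by the $\alpha=1$ seminorm (this is exactly the paper's observation that \eqref{eq:to} comes for free when $\alpha=1$), and then a comparison of the resulting $\ln t$ bound against $F(t)$, split according to the size of $t$. The ``Moreover'' argument and the regime $t\ge t_0$ (where $1+F(t)\le 2F(t)$) are correct as you present them.

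The genuine gap is the step you name and then ``grant'': the comparison $\ln t\lesssim F(t)$ for $t$ close to $1$, i.e. $F(1+x)\gtrsim x$ for small $x>0$. This is not a consequence of $F\in\mathcal{A}$, and your hope that ``the cancellation at $1$'' supplies it is backwards: cancellation in Definition \ref{def657} is the \emph{upper} bound $F(1+x)\le Cx$, while you need a \emph{lower} bound of the same order, which the class $\mathcal{A}$ does not provide. Concretely, $F=\ln^{\beta}$ with $\beta>1$ belongs to $\mathcal{A}$ (it is one of the paper's listed examples) and has $F(1+x)\approx x^{\beta}=o(x)$, so your inequality fails for it; worse, the inclusion ${\it Lmo}_{1+F}\subset {\it Lmo}_{F}$ itself fails for this $F$. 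Indeed, $f(x)=\ln(1-\ln|x|)$ from Proposition \ref{pro3} belongs to ${\it Lmo}_{\ln}\subset {\it Lmo}_{1+\ln^{\beta}}$ (because $\ln t\le 1+\ln^{\beta}t$), yet for the concentric balls $B_1=B(0,r_1)$, $B_2=B(0,r_1/2)$ a direct computation (consistent with the sharpness of \eqref{eq:end} noted in the paper) gives $|\av_{B_2}f-\av_{B_1}f|\approx \frac{\ln 2}{1+|\ln r_1|}\approx t-1$, whereas $F(t)=\ln^{\beta}t\approx (t-1)^{\beta}\ll t-1$ as $r_1\to 0$. So the missing step cannot be closed under the stated hypotheses; one needs an additional assumption such as $\liminf_{t\to 1^{+}}F(t)/(t-1)>0$ (true for $F=\ln$, which is all that Theorem \ref{main} requires). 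You did locate the real crux of the proposition: the paper's own proof glosses over exactly this point with the assertion that ``$F(t)\simeq 1+F(t)$ for $t\ge 1$'', which is false near $t=1$ since the cancellation property forces $F(1)=0$.
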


\begin{proof} Since $F \leq 1+F$, it follows that $ \ba \subset {\it Lmo}_{1+F}$. 
Reciprocally, since for $t\geq 1$, $F(t) \simeq 1+F(t)$ (due to $F\in{\mathcal A}$), following the proof of Proposition \ref{prop:F} to prove that ${\it Lmo}_{1+F} \subset \ba$ it is sufficient to check that for every function $f\in {\it Lmo}$, every ball $B$ of radius $r<\frac{1}{4}$ then
\begin{equation} \left| \av_{B} f -\av_{2B} \right| \lesssim \frac{1}{|\ln(r)|}. \label{eq:to} \end{equation}
Indeed, the only difference between ${\it Lmo}_{1+F}$ and $\ba$ (where $F$ is replaced by $1+F$) is the loss of the cancellation property of $F$ at the point $1$ and this property was used in the previous proposition to check (\ref{eq:to}).\\
However, here since $\alpha=1$ (\ref{eq:to}) automatically holds since the function belongs to ${\it Lmo}$. Then producing the same reasoning as for Proposition \ref{prop:F}, we deduce that ${\it Lmo} \subset {\it Lmo}_{\ln}$, which yields ${\it Lmo}= {\it Lmo}_{\ln}$, since the other embedding is obvious.
\end{proof}

\subsection{Regularity of the flow map}\label{log-lip}
We shall continue in this section our excursion into function spaces by  introducing the log-Lipschitz class with exponent $\beta \in (0,1]$, denoted by $L^\beta L$ and  showing some links with the foregoing $\baa$ spaces. We next examine the regularity of the flow map associated to a vector field belonging to this \mbox{class $L^\beta L$. } We start with the following definition. We say that 
 a function $f$ belongs to the class $L^\beta L$ if 
$$ \|f\|_{L^\beta L}\triangleq\sup_{0<|x-y|<\frac12} \ \frac{|f(x)-f(y)|}{|x-y|\big|\ln|x-y|\big|^\beta}<\infty.  $$

Take now  a smooth divergence-free vector field $u=(u^1,u^2)$ on $\RR^2$ and $\omega=\partial_1 u^2-\partial_2 u^1$ its vorticity. It is apparent from straightforward computations that 
\begin{equation}\label{bs}
\Delta u=\nabla^\perp\omega.
\end{equation}
This identity leads through the use of the fundamental solution of the Laplacian to  the so-called Biot-Savart law.
 Now we shall  solve the \mbox{equation \eqref{bs}} when the source term belongs to the space $ \baa\cap L^p.$ Without going further into the details we restrict ourselves to the a priori estimates required for the resolution of this equation.
 
\begin{Prop} \label{coro} Let  $\alpha \in(0,1)$, $p\in(1,\infty)$ and   $\omega\in \LMO\cap L^p$  be the vorticity of the velocity $u$ given by the equation $(\ref{bs})$. Then $u\in L^{1-\alpha} L$ and there exists an absolute  constant $C>0$ such that
$$ \|u \|_{L^{1-\alpha} L} \leq C \|\omega\|_{\LMO\cap L^p}.$$
\end{Prop}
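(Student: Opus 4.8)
The plan is to use the Biot--Savart law together with a Littlewood--Paley decomposition, and to extract the gain $q^{-\alpha}$ furnished by Proposition \ref{prop:delta}. Since $\Delta u=\nabla^\perp\omega$, the velocity is recovered by $u=\nabla^\perp\Delta^{-1}\omega$, so that each component of $\nabla u$ is, up to sign, one of the iterated Riesz transforms $\mathcal{R}_{ij}\omega$. Two frequency-localized estimates will drive the argument. First, for $q\geq 1$ the multiplier $\Delta_q\nabla^\perp\Delta^{-1}$ is localized at frequency $2^q$ and homogeneous of order $-1$, hence equals $2^{-q}$ times a convolution operator with a uniformly $L^1$ kernel; combined with the first assertion of Proposition \ref{prop:delta} this yields $\|\Delta_q u\|_{L^\infty}\lesssim 2^{-q}\|\Delta_q\omega\|_{L^\infty}\lesssim 2^{-q}q^{-\alpha}\|\omega\|_{\LMO}$. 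Second, Proposition \ref{prop:delta}(2), applied to each component $S_N\mathcal{R}_{ij}\omega$ of $S_N\nabla u$, gives for $N\geq1$ the bound $\|\nabla S_N u\|_{L^\infty}=\|S_N\nabla u\|_{L^\infty}\lesssim N^{1-\alpha}\|\omega\|_{\LMO\cap L^p}$; this is exactly the point where the hypothesis $\omega\in L^p$ enters, since it controls the low-frequency block $\Delta_{-1}\nabla u$ that the $\LMO$ norm alone does not see.

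Fix $x\neq y$ with $\delta:=|x-y|<\tfrac12$ and choose the integer $N\geq 1$ with $2^{-N}\leq \delta<2^{-N+1}$, so that $N\approx|\ln\delta|$. I would split
\[ u(x)-u(y)=\big(S_N u(x)-S_N u(y)\big)+\sum_{q\geq N}\big(\Delta_q u(x)-\Delta_q u(y)\big). \]
For the low-frequency part I apply the mean value inequality together with the second estimate above: $|S_N u(x)-S_N u(y)|\leq \delta\,\|\nabla S_N u\|_{L^\infty}\lesssim \delta\,N^{1-\alpha}\|\omega\|_{\LMO\cap L^p}$. For the high-frequency tail I use the crude bound $|\Delta_q u(x)-\Delta_q u(y)|\leq 2\|\Delta_q u\|_{L^\infty}$ and the first estimate, so that $\sum_{q\geq N}\|\Delta_q u\|_{L^\infty}\lesssim \sum_{q\geq N}2^{-q}q^{-\alpha}\|\omega\|_{\LMO}\lesssim 2^{-N}N^{-\alpha}\|\omega\|_{\LMO}\lesssim \delta\,N^{-\alpha}\|\omega\|_{\LMO}$.

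Adding the two contributions and recalling $N\approx|\ln\delta|$ gives $|u(x)-u(y)|\lesssim \delta\,|\ln\delta|^{1-\alpha}\|\omega\|_{\LMO\cap L^p}$, the low-frequency term $\delta|\ln\delta|^{1-\alpha}$ dominating the tail $\delta|\ln\delta|^{-\alpha}$. Dividing by $\delta\,|\ln\delta|^{1-\alpha}$ and taking the supremum over $0<|x-y|<\tfrac12$ yields precisely $\|u\|_{L^{1-\alpha}L}\lesssim\|\omega\|_{\LMO\cap L^p}$. The main obstacle, and the heart of the matter, is the bookkeeping of the logarithmic powers: a naive log-Lipschitz estimate would only give exponent $1$ (the classical $\delta|\ln\delta|$ bound), and it is exactly the decay $\|\Delta_q\omega\|_{L^\infty}\lesssim q^{-\alpha}\|\omega\|_{\LMO}$ coming from membership in $\LMO$ that converts the low-frequency sum $\sum_{q\leq N}q^{-\alpha}\approx N^{1-\alpha}$ into the improved exponent $1-\alpha$; one must check that this gain is genuinely summable and that the endpoint blocks $q=-1,0$ are absorbed by the $L^p$ norm.
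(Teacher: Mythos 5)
Your proposal is correct and follows essentially the same route as the paper's own proof: the same Littlewood--Paley splitting $u=S_Nu+\sum_{q\geq N}\Delta_q u$ with $2^{-N}\approx|x-y|$, the mean value theorem combined with Proposition \ref{prop:delta}(2) (via the identification $\nabla u=\mathcal{R}_{ij}\omega$ up to signs) for the low frequencies, and the Bernstein-type gain $\|\Delta_q u\|_{L^\infty}\lesssim 2^{-q}\|\Delta_q\omega\|_{L^\infty}\lesssim 2^{-q}q^{-\alpha}\|\omega\|_{\LMO}$ for the high-frequency tail. There is no gap; your additional remarks on the role of the $L^p$ norm for the block $\Delta_{-1}$ match exactly how the paper handles it inside Proposition \ref{prop:delta}(2).
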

\begin{proof}
Let $N\in \N^\star$ be a given number that will be fixed later and $0<|x-y|<\frac12$. Using the mean value theorem combined with Bernstein inequality give
\begin{eqnarray*}
|u(x)-u(y)|&\le& |S_Nu(x)-S_Nu(y)|+2\sum_{q\geq N}\|\Delta_q u\|_{L^\infty}\\
&\lesssim& |x-y|\|\nabla S_N u\|_{L^\infty}+\sum_{q\geq N}2^{-q}\|\Delta_q \omega\|_{L^\infty}.
\end{eqnarray*}
From Proposition \ref{prop:delta}, it follows
\begin{eqnarray*}
|u(x)-u(y)|&\lesssim& N^{1-\alpha}\|\omega\|_{\LMO\cap L^p}|x-y|+\|\omega\|_{\LMO}\sum_{q\geq N}2^{-q}q^{-\alpha}\\
&\lesssim&\|\omega\|_{\LMO\cap L^p}\,N^{1-\alpha}\left(|x-y| +2^{-N}\right).
\end{eqnarray*}
By choosing $2^{-N}\approx |x-y|$ we find
$$
|u(x)-u(y)|\lesssim |x-y|\big|\ln|x-y|\big|^{1-\alpha}\,\|\omega\|_{\LMO\cap L^p}.
$$
This completes the proof of the proposition.
\end{proof}
We recall Osgood Lemma whose proof can be found for instance in \cite{bah-ch-dan}, page 128.
\begin{Lemm}[Osgood Lemma] \label{osgood1}
Let $a, A>0$, $\Gamma: [a,+\infty[\to\R_+$ be a non-decreasing function and $\gamma:[t_0, T]\to\R_+$ be a locally integrable function. Let $\rho:[t_0,T]\to [a,+\infty[$ be a measurable function such that
$$
\rho(t)\le A+\int_{t_0}^t \gamma(\tau) \Gamma(\rho(\tau))\,\rho(\tau) \, d\tau.
$$
Let $\displaystyle{\mathcal{M}(y)=\int_{a}^{y} \frac{1}{x\Gamma(x)}dx}$ and assume that $\displaystyle{\lim_{y\to+\infty}\mathcal{M}(y)=+\infty}$. Then
$$
\forall t\in[t_0, T],\quad \rho(t)\le \mathcal{M}^{-1}\Big(\mathcal{M}(A)+ \int_{0}^t\gamma(\tau)d\tau\Big).
$$
\end{Lemm}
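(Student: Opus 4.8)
The plan is to bound $\rho$ by the solution of the associated differential inequality, following the classical two-step scheme: replace the integral inequality by a pointwise one, then integrate it after a change of variables dictated by $\mathcal{M}$. First I would introduce the right-hand side itself as an auxiliary function,
\[
R(t) = A + \int_{t_0}^t \gamma(\tau)\,\Gamma(\rho(\tau))\,\rho(\tau)\,d\tau .
\]
The hypothesis that $\rho(t)\le A+\int_{t_0}^t\gamma\,\Gamma(\rho)\,\rho\,d\tau$ is assumed finite means the integrand is locally integrable, so $R$ is well defined and absolutely continuous, with $R(t_0)=A$ and $\rho(t)\le R(t)$ everywhere. Since $\rho$ takes values in $[a,+\infty)$ and $\rho\le R$, one has $R(t)\ge a$ throughout (and, at $t=t_0$, this forces $A\ge a$), so every quantity $\mathcal{M}(R(t))$ appearing below is legitimate.

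Second, I would turn this into a differential inequality. Because $\Gamma$ is non-decreasing and $x\mapsto x$ is increasing, $\rho\le R$ yields $\Gamma(\rho)\,\rho\le \Gamma(R)\,R$, whence, at almost every $t$ (where $R'=\gamma\,\Gamma(\rho)\,\rho$ by the Lebesgue differentiation theorem),
\[
R'(t) \le \gamma(t)\,\Gamma(R(t))\,R(t).
\]
As $R(t)\ge a$ and $\Gamma>0$, I may divide by $R\,\Gamma(R)$ and recognize $\mathcal{M}'(y)=\frac{1}{y\,\Gamma(y)}$, so that
\[
\frac{d}{dt}\,\mathcal{M}(R(t)) = \frac{R'(t)}{R(t)\,\Gamma(R(t))} \le \gamma(t)\qquad\text{for a.e. } t.
\]

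Third, integrating from $t_0$ to $t$ gives $\mathcal{M}(R(t))\le \mathcal{M}(A)+\int_{t_0}^t\gamma(\tau)\,d\tau$. Since $\mathcal{M}$ is strictly increasing (its integrand is positive) and $\mathcal{M}(y)\to+\infty$ by assumption, it is a bijection of $[a,+\infty)$ onto $[0,+\infty)$ with increasing inverse $\mathcal{M}^{-1}$; applying $\mathcal{M}^{-1}$ and then using $\rho\le R$ produces
\[
\rho(t)\le R(t)\le \mathcal{M}^{-1}\!\Big(\mathcal{M}(A)+\int_{t_0}^t\gamma(\tau)\,d\tau\Big),
\]
which is the claim. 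Note that $A\ge a$ makes $\mathcal{M}(A)\ge 0$, so the argument of $\mathcal{M}^{-1}$ always lies in its domain and the bound is never vacuous.

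The delicate point, which I would treat with care, is the passage from the almost-everywhere differential inequality to its integrated form. Here the key observation is that $\mathcal{M}$ is globally Lipschitz on $[a,+\infty)$: since $\Gamma$ is non-decreasing and positive, $\mathcal{M}'(y)=(y\,\Gamma(y))^{-1}\le (a\,\Gamma(a))^{-1}$. Consequently $\mathcal{M}\circ R$ is the composition of a Lipschitz map with an absolutely continuous one, hence absolutely continuous, so the chain rule holds almost everywhere and its derivative may legitimately be integrated. When $\Gamma$ is continuous the chain rule is immediate; the monotone-Lipschitz framework handles the general case. The divergence hypothesis $\mathcal{M}(+\infty)=+\infty$ is precisely what secures the surjectivity needed for $\mathcal{M}^{-1}$ to act on the whole right-hand side.
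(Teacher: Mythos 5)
Your proof is correct. The paper does not actually prove this lemma---it defers to \cite{bah-ch-dan}, page 128---and your argument (introducing the right-hand side $R$ as an auxiliary absolutely continuous function, using $\rho\le R$ and the monotonicity of $\Gamma$ to get $R'\le \gamma\,\Gamma(R)R$ a.e., then integrating $\frac{d}{dt}\mathcal{M}(R)\le\gamma$, with the Lipschitz bound $\mathcal{M}'\le (a\Gamma(a))^{-1}$ justifying the chain rule) is precisely the standard argument given in that reference, so the two approaches coincide. Incidentally, your conclusion carries $\int_{t_0}^t\gamma(\tau)\,d\tau$ rather than the paper's $\int_{0}^t\gamma(\tau)\,d\tau$; yours is the correct form, the paper's lower limit being an evident typo since $\gamma$ is only defined on $[t_0,T]$.
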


In what follows we discuss the regularity of the flow map associated to a vector field belonging to the log-Lipschitz class. This precise description will be of great interest in the proof of the main result.
\begin{Prop}  \label{prop} Let  \mbox{$u$}  be a smooth divergence-free vector field belonging to $L^{1-\alpha} L,$ with $\alpha\in(0,1)$ and  \mbox{$\psi$}  be its  flow, that is the solution of the differential equation,
$$
\partial_t{\psi}(t,x)=u(t,\psi(t,x)),\qquad {\psi}(0,x)=x.
$$
Then, there exists $C\triangleq C(\alpha)>1$ such that for every  \mbox{$t\geq 0$}
$$
|x-y|< \ell(t)\Longrightarrow |\psi^{\pm1}(t,x)-\psi^{\pm1}(y)|\leq |x-y| e^{C V(t)|\ln|x-y||^{1-\alpha}},$$
where $\ell(t)\in(0,\frac12)$ is given by 
$$
  \ell(t) e^{C V(t)|\ln(\ell(t))|^{1-\alpha}}=\frac12\quad\hbox{and}\quad  V(t)\triangleq\int_0^t \|u(\tau)\|_{L^{1-\alpha} L}d\tau.
$$
Here we denote by $\psi^1$ the flow $\psi$ and $\psi^{-1}$ its inverse.
\end{Prop}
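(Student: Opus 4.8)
The plan is to control the growth of $|\psi(t,x)-\psi(t,y)|$ via a Gronwall/Osgood-type argument, using the log-Lipschitz bound on $u$. Writing $\rho(t)\triangleq|\psi(t,x)-\psi(t,y)|$, I would differentiate in time and use the flow equation to get
$$
\partial_t\rho(t)\leq |u(t,\psi(t,x))-u(t,\psi(t,y))|\leq \|u(t)\|_{L^{1-\alpha}L}\,\rho(t)\,\big|\ln\rho(t)\big|^{1-\alpha},
$$
valid as long as $\rho(t)<\tfrac12$, by the very definition of the $L^{1-\alpha}L$ norm. This is exactly the hypothesis of Osgood's Lemma (Lemma \ref{osgood1}) with $\gamma(\tau)=\|u(\tau)\|_{L^{1-\alpha}L}$ and modulus $\Gamma(x)=|\ln x|^{1-\alpha}$ (read near $x\to 0$ rather than $x\to\infty$, so the relevant integral is $\int_0 \frac{dx}{x|\ln x|^{1-\alpha}}$, which diverges precisely because $1-\alpha<1$). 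Integrating $\frac{d}{dt}\big|\ln\rho(t)\big|^{\alpha}$, which is the natural primitive here, should yield
$$
\big|\ln\rho(t)\big|^{\alpha}\geq \big|\ln\rho(0)\big|^{\alpha}-C\,V(t),
$$
and after exponentiating and absorbing the exponent $\alpha$ via the elementary inequality $(a-b)^{1/\alpha}\geq a^{1/\alpha}-C b^{1/\alpha}$ for $a\geq b\geq 0$, one recovers the stated bound $\rho(t)\leq |x-y|\,e^{CV(t)|\ln|x-y||^{1-\alpha}}$.

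Next I would address the domain of validity, i.e. why the estimate holds up to the threshold $\ell(t)$. The point is that the right-hand side $|x-y|\,e^{CV(t)|\ln|x-y||^{1-\alpha}}$ is, for fixed $t$, an increasing function of $|x-y|$ near $0$ that takes the value $\tfrac12$ exactly when $|x-y|=\ell(t)$ (this is how $\ell(t)$ is defined). Hence for $|x-y|<\ell(t)$ the bound stays below $\tfrac12$, so $\rho$ never reaches the region where the log-Lipschitz estimate fails, and the differential inequality remains in force on all of $[0,t]$. A continuity/bootstrap argument makes this rigorous: as long as $\rho$ stays below $\tfrac12$ the inequality applies and the explicit bound holds, and that bound in turn keeps $\rho$ below $\tfrac12$, so the good region is never exited. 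For the inverse flow $\psi^{-1}$, the same argument applies verbatim since $\psi^{-1}(t,\cdot)$ is the flow of the (time-reversed) vector field, which lies in the same class with the same $V(t)$; the measure-preserving, divergence-free structure guarantees the symmetric roles of $\psi^{+1}$ and $\psi^{-1}$.

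The main obstacle I anticipate is the careful manipulation of the nonlinear modulus $\Gamma(x)=|\ln x|^{1-\alpha}$: one must track the constant $C=C(\alpha)$ through the integration of $\frac{d}{dt}|\ln\rho|^{\alpha}$ and, more delicately, through the passage from the additive estimate on $|\ln\rho(t)|^{\alpha}$ back to a multiplicative estimate on $\rho(t)$ itself. The elementary inequality comparing $(a-b)^{1/\alpha}$ with $a^{1/\alpha}-Cb^{1/\alpha}$ is where the dependence $C(\alpha)>1$ genuinely enters, and it is essential that $\alpha\in(0,1)$ so that both the divergence of the Osgood integral and this algebraic comparison go through. The self-consistency of the threshold $\ell(t)$, namely that the defining transcendental equation $\ell(t)\,e^{CV(t)|\ln\ell(t)|^{1-\alpha}}=\tfrac12$ has a unique solution in $(0,\tfrac12)$, follows from monotonicity and the intermediate value theorem and is routine once the main estimate is in hand.
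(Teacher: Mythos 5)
Your overall strategy tracks the paper's proof closely: the same integral inequality for $\rho(t)=|\psi(t,x)-\psi(t,y)|$ valid while $\rho<\tfrac12$, integration of $\tfrac{d}{dt}|\ln\rho|^{\alpha}$ to get $|\ln\rho(t)|^{\alpha}\geq|\ln\rho(0)|^{\alpha}-C\alpha V(t)$, a continuity/bootstrap argument in time, the monotonicity of $x\mapsto x\,e^{CV(t)|\ln x|^{1-\alpha}}$ near $0$ to exploit the definition of $\ell(t)$, and time reversal (or the two-parameter flow) for $\psi^{-1}$. However, the step converting the additive estimate on $|\ln\rho(t)|^{\alpha}$ back into the multiplicative bound contains a genuine error: the ``elementary inequality'' $(a-b)^{1/\alpha}\geq a^{1/\alpha}-C\,b^{1/\alpha}$ for $a\geq b\geq 0$ is \emph{false} for $\alpha\in(0,1)$, no matter how large $C=C(\alpha)$ is. Since $1/\alpha>1$, the map $x\mapsto x^{1/\alpha}$ is convex, and the subadditivity you are invoking is valid only for concave powers (exponents $\leq 1$). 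Concretely, for $\alpha=\tfrac12$ it reads $(a-b)^{2}\geq a^{2}-Cb^{2}$, i.e.\ $2ab\leq (C+1)b^{2}$, which fails as soon as $a/b$ is large --- and here $a=|\ln\rho(0)|^{\alpha}$ is precisely the quantity that is large (small $|x-y|$) while $b=C\alpha V(t)$ stays bounded. Moreover, even if that inequality were granted, it would produce $\rho(t)\leq \rho(0)\,e^{C'V(t)^{1/\alpha}}$, an exponent of the form $V^{1/\alpha}$ rather than the asserted $V(t)\,|\ln|x-y||^{1-\alpha}$, so ``one recovers the stated bound'' does not follow as written.

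The correct conversion, which is what the paper does, is the tangent-line (Taylor) inequality for the \emph{convex} function $x\mapsto x^{1/\alpha}$: for $0\leq B\leq A$,
\begin{equation*}
(A-B)^{1/\alpha}\ \geq\ A^{1/\alpha}-\tfrac{1}{\alpha}\,A^{\frac{1}{\alpha}-1}B,
\end{equation*}
applied with $A=|\ln\rho(0)|^{\alpha}$ and $B=C\alpha V(t)$; this yields $|\ln\rho(t)|\geq|\ln\rho(0)|-CV(t)\,|\ln\rho(0)|^{1-\alpha}$ and hence exactly the claimed bound $\rho(t)\leq\rho(0)e^{CV(t)|\ln\rho(0)|^{1-\alpha}}$ (the paper writes this as $-\big(|\ln z(0)|^{\alpha}-C\alpha V(t)\big)^{1/\alpha}=\ln z(0)+\tfrac1\alpha\int_0^{C\alpha V(t)}\big(|\ln z(0)|^{\alpha}-x\big)^{\frac1\alpha-1}dx$ and bounds the integrand by $|\ln z(0)|^{1-\alpha}$). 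Note also that this step requires $C\alpha V(t)\leq|\ln\rho(0)|^{\alpha}$, which is \emph{not} automatic from ``$\rho$ stays below $\tfrac12$''; the paper verifies it from $\rho(0)<\ell(t)$ together with the fact that $x\mapsto x e^{CV(t)|\ln x|^{1-\alpha}}$ is increasing only up to the critical point $x_c$ defined by $|\ln x_c|^{\alpha}=C(1-\alpha)V(t)$, and that $\ell(t)<x_c$. Your bootstrap should incorporate this check alongside the condition $\rho<\tfrac12$; with these two repairs your argument becomes the paper's proof.
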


\begin{proof} It is well-known that for every  \mbox{$t\geq 0$}  the mapping  \mbox{$  x\mapsto \psi(t,x)$}  is a   Lebesgue  measure preserving homeomorphism  (see \cite{Ch1} for instance). We fix   \mbox{$x\neq y$}  such that $|x-y|<\frac12$ and  we define for $t\geq0$,
$$
z(t)\triangleq|\psi(t,x)-\psi(t,y)|.
$$
Clearly the function  \mbox{$z$}  is strictly positive and satisfies 
$$
z(t)\leq z(0)+C\int_0^t \|u(\tau)\|_{L^{1-\alpha} L}|\ln z(\tau)|^{1-\alpha} z(\tau)d\tau,
$$ 
as soon as $z(\tau)\leq \frac{1}{2}$, for all $\tau\in[0,t)$. Let $T>0$ and  $I\triangleq\big\{t\in [0,\ell(T)]\backslash \,\forall \tau\in[0,t], z(\tau)\leq \frac12\big\},$ where the value of $\ell(T)$ has been  defined in Proposition \ref{prop}.  We aim to show that the set $I$ is the full interval $[0,\ell(T)]$. First $I$ is a non-empty set since $0\in I$  and it  is an interval according to its definition. The continuity in time of the flow guarantees that $I$ is closed. It remains to show that $I$ is an open set of $[0,\ell(T)]$.  From the differential equation,
$$
\forall t\in I,\quad z(t)\leq z(0)+C\int_0^t \|u(\tau)\|_{L^{1-\alpha} L}(-\ln z(\tau))^{1-\alpha} z(\tau)d\tau.
$$
Accordingly, we infer
$$
-|\ln z(t)|^{\alpha}+|\ln z(0)|^{\alpha}\le C\alpha V(t),
$$
and this yields
$$
|\ln z(t)|\geq \big(|\ln z(0)|^{\alpha}-C\alpha V(t)\big)^{\frac1\alpha}.
$$
despite that 
\begin{equation}\label {c01}
C\alpha V(t)\le|\ln z(0)|^\alpha.
\end{equation}  Consequently
$$
z(t)\le e^{-\big(|\ln z(0)|^{\alpha}-C\alpha V(t)\big)^{\frac1\alpha}}.
$$
By virtue of  Taylor formula and since $\frac1\alpha-1>0$ we get
\begin{eqnarray*}
-\big(|\ln z(0)|^{\alpha}-C\alpha V(t)\big)^{\frac1\alpha}&=&-|\ln z(0)|+\frac1\alpha\int_0^{C\alpha V(t)}\big(|\ln z(0)|^{\alpha}-x\big)^{\frac1\alpha-1}dx\\
&\le&\ln z(0)+ CV(t)|\ln z(0)|^{1-\alpha}.
\end{eqnarray*}
It follows that
$$
z(t)\le z(0) e^{CV(t)|\ln z(0)|^{1-\alpha}}.
$$
Therefore to show that $I$ is open it suffices to make the assumption
$$
z(0) e^{CV(t)|\ln z(0)|^{1-\alpha}}<\frac12,
$$
which is satisfied when $z(0)< \ell(T).$ This last claim follows from the  increasing property of the function $x\mapsto x e^{CV(t)|\ln x|^{1-\alpha}}$  on the interval $[0, x_c]$ where $x_c<1$ is the unique real number satisfying $|\ln x_c|^\alpha=C(1-\alpha) V(t).$ From the definition of $\ell(t)$ we can easily check that $\ell(T)<x_c$ and \eqref{c01} is satisfied.

The proof of the assertion for $\psi^{-1}$ can be derived by performing  similar computations for the generalized flow defined by 
$$
\partial_t\psi(t,s,x)=u(t,\psi(t,s,x)),\quad\psi(s,s,x)=x
$$
and the flow $\psi^{-1}$ is nothing but $x\mapsto\psi(0,t,x).$

\end{proof}

\section{Regularity persistence} \label{sec:per}
The main object of this section is to examine the propagation of the initial regularity measured  in the spaces $\ba$ for the following transport model governed by a divergence-free vector field,
\begin{equation}
\label{T23}
 \left\{ 
\begin{array}{ll} 
\partial_t {{w}}+u\cdot\nabla w=0,\qquad x\in \mathbb R^2, t>0, \\
\textnormal{div }u=0,\\
w_{\mid t=0}=f.
\end{array} \right.    
     \end{equation}
Along the first part of this study we shall not prescribe any relationship between  the  solution $w$ and the vector field $u$. Once this study is achieved, we will apply this result for the inviscid  vorticity  where the vector field is induced by  the vorticity. This will enable us not only  to prove Theorem \ref{main} but also to state more general results on the local and global theory  extending the special case of $F(x)=\ln x$.
  \subsection{Composition in the space $\ba$ }
We begin with the following observation concerning the structure of the solutions to \eqref{T23}. Under reasonable assumptions on the regularity of  the velocity, the solution can be recovered from its initial data and the flow $\psi$ according to the formula
$ w(t)=f\circ\psi^{-1}(t)$. Thus  the study of the  propagation in the space $\baa$  reduces to 
the composition by a measure preserving map \mbox{in this space.}  We should note that this latter problem  can be easily solved as soon as the map is bi-Lipschitz (see \cite{BK} for composition in some BMO-type spaces by a bi-Lispchitz measure preserving map). 
In our context   the flow is not necessarily Lipschitz but in some sense very close to this class. It is apparent  according to Proposition \ref{prop} that $\psi$ belongs to the class $C^{s}$ for every $s<1.$ It turns out that working with a flow under the Lipschitz class has a profound effect and makes the  composition in the space $\baa$  very  hard to get. This is the principal reason why    we need to use the weighted subspace $\ba$ in order to compensate this weak regularity and  consequently to well-define the composition. Our result reads as follows.

\begin{Theo}  \label{decom}
Let  $\alpha\in(0,1),\, F\in \mathcal{A}$ and consider a smooth solution $w$ of the equation \eqref{T23} defined on $[0,T]$. Then there exists a constant $C\triangleq C(\alpha)>0$ such that the following holds true:
\begin{enumerate}
\item For every   $t\in [0,T]$
$$
\| w(t)\|_{\LMO}\leq C\|f\|_{\ba} \big(1+V(t)\big) F(2+V^{\frac1\alpha}(t)),
$$
with $V(t)\triangleq \displaystyle{\int_0^t\|u(\tau)\|_{L^{1-\alpha}L}d\tau}$.
\item  For every   $ t\in [0,T]$
$$
\|w(t)\|_{\LMO_{1+F}}\leq C\|f\|_{\ba}  F(2+V^{\frac1\alpha}(t)).
$$
\end{enumerate}
\end{Theo}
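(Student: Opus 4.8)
The plan is to exploit the representation $w(t)=f\circ\psi^{-1}(t)$ and the fact that $\psi$ preserves Lebesgue measure. For any ball $B=B(x_0,r)$, setting $Q\triangleq\psi^{-1}(t,B)$ and $\tilde x_0\triangleq\psi^{-1}(t,x_0)$, measure preservation gives $\av_{B}w=\av_{Q}f$ and $\av_{B}|w-\av_{B}w|=\av_{Q}|f-\av_{Q}f|$, so that \emph{both} assertions reduce to controlling averages and mean oscillations of $f$ over the distorted set $Q$. The geometric input is Proposition \ref{prop}: for $r<\ell(t)$ it traps $Q$ between two concentric balls $B(\tilde x_0,\underline\rho)\subset Q\subset B(\tilde x_0,\overline\rho)$ with $\overline\rho\approx r\,e^{CV(t)|\ln r|^{1-\alpha}}$ and $\underline\rho\approx r\,e^{-CV(t)|\ln r|^{1-\alpha}}$, whose log-radii are comparable to $|\ln r|$. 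I would record from the outset that the defining relation of $\ell(t)$ forces $|\ln\ell(t)|\approx V^{\frac1\alpha}(t)$; this is precisely what produces the argument $2+V^{\frac1\alpha}$ inside $F$, and it also yields the crucial a priori bound $V(t)\lesssim|\ln r|^{\alpha}$ throughout the admissible range $r<\ell(t)$. Balls with $r\gtrsim\ell(t)$ (where $|\ln r|^\alpha$ is itself of size $\lesssim V$) are treated separately and more cheaply, by comparison with the critical scale.

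For assertion $(1)$ I would write, for a constant $c=\av_{B(\tilde x_0,\underline\rho)}f$,
$$
\av_{Q}\left|f-\av_{Q}f\right|\le 2\,\av_{Q}\left|f-c\right|,
$$
and decompose the integrand along the dyadic shells between $\underline\rho$ and $\overline\rho$. On each shell the contribution splits into a \emph{local oscillation} term $\av_{B(\tilde x_0,\rho_j)}|f-\av_{B(\tilde x_0,\rho_j)}f|$ and a \emph{radial drift} term $|\av_{B(\tilde x_0,\rho_j)}f-c|$. The drift terms are exactly what the second part of the $\ba$-norm controls: telescoping them and using the cancellation of $F$ at $1$ (as in the proof of Proposition \ref{prop:F}) bounds their total by $\|f\|_{\ba}\,\ln\!\bigl(1+CV|\ln r|^{-\alpha}\bigr)$, which after multiplication by $|\ln r|^\alpha$ is $\lesssim(1+V)F(2+V^{\frac1\alpha})\|f\|_{\ba}$. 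So the drift is responsible for the factor $(1+V)$.

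The main obstacle is the local-oscillation piece, because the naive estimate $\av_{Q}|f-\av_{\hat B}f|\le\frac{|\hat B|}{|Q|}\av_{\hat B}|f-\av_{\hat B}f|$ (with $\hat B=B(\tilde x_0,\overline\rho)$) loses the volume-distortion factor $|\hat B|/|Q|=e^{2CV|\ln r|^{1-\alpha}}$, which is \emph{not} bounded uniformly in $r$. To defeat it I would use that $Q=\psi^{-1}(B)$ is a \emph{thin} subset of $\hat B$ whose intersection with each shell is constrained by measure preservation (a point of $Q$ at distance $\rho$ from $\tilde x_0$ has pre-image at distance $\ge g^{-1}(\rho)$ from $x_0$, so the outer shells carry little mass and $\sum_j|Q\cap\text{shell}_j|\le|Q|$), combined with the higher integrability of $f$: since $\LMO\hookrightarrow\mathit{Bmo}$ at each scale, John--Nirenberg gives $\av_{\hat B}\exp\!\bigl(\lambda|f-\av_{\hat B}f|/\mathrm{osc}\bigr)\lesssim1$. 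Integrating the distribution function against the small relative measure $\delta=|Q|/|\hat B|$ and then summing the shells by Hölder in the shell index is the delicate step; here the sub-polynomial nature of the distortion and the inequality $V\lesssim|\ln r|^\alpha$ are what keep the outcome bounded by $|\ln r|^{-\alpha}(1+V)F(2+V^{\frac1\alpha})\|f\|_{\ba}$ uniformly in $r$. I expect this summation, and the careful bookkeeping near the critical scale $r\approx\ell(t)$, to be the hardest part of the argument.

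For assertion $(2)$ the situation is more favourable. Given $2B_2\subset B_1$ one has $\psi^{-1}(B_2)\subset\psi^{-1}(B_1)$, and here I only need to compare the two \emph{averages} $\av_{\psi^{-1}(B_2)}f$ and $\av_{\psi^{-1}(B_1)}f$, not mean oscillations; inserting the concentric reference balls supplied by Proposition \ref{prop} and applying the $\ba$-drift bound, the slow-growth property $F(xy)\lesssim(1+F(x))(1+F(y))$ converts the flow distortion factor into $F(2+V^{\frac1\alpha})$, while the replacement of $F$ by $1+F$ precisely absorbs the boundary oscillation terms that the cancellation of $F$ at $1$ can no longer kill. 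Since no volume ratio enters this comparison of averages, this part does not cost the factor $(1+V)$ and yields the sharper bound $C\|f\|_{\ba}F(2+V^{\frac1\alpha})$ claimed in the statement.
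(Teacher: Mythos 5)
Your overall scaffolding coincides with the paper's: reduction to composition with a measure-preserving flow, a two-regime split at a critical scale (your $\ell(t)$ plays the role of the paper's threshold $r_t$, and your observations $|\ln \ell(t)|\approx 1+V^{\frac1\alpha}(t)$ and $V(t)\lesssim |\ln r|^{\alpha}$ on the admissible range are exactly \eqref{m12} and \eqref{eqss:1}), and a splitting into a local-oscillation part and a drift part, with the drift handled by the cancellation of $F$ at $1$ — that last computation is correct and does produce the factor $1+V(t)$. The genuine gap is exactly the step you flag as ``the delicate step'': the local-oscillation piece cannot be closed by John--Nirenberg plus shell-by-shell mass counting. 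John--Nirenberg (even in its Spanne-type refinement adapted to $\LMO$) only sees the mean-oscillation bound, which at the relevant scales is $\kappa\approx |\ln r|^{-\alpha}\|f\|_{\LMO}$, and integrating over a subset $E$ of a ball $\hat B$ costs $\av_{E}|f-\av_{\hat B}f|\lesssim \kappa\,\bigl(1+\ln(|\hat B|/|E|)\bigr)$. Since $|\hat B|/|Q|=e^{2CV(t)|\ln r|^{1-\alpha}}$, after multiplication by $|\ln r|^{\alpha}$ this yields $1+V(t)|\ln r|^{1-\alpha}$, which blows up as $r\to 0$ for fixed $t$, whereas the target $(1+V(t))F(2+V^{\frac1\alpha}(t))$ is independent of $r$. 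The shell refinement does not repair this: the only constraints your argument supplies are $\sum_j m_j\le |Q|$ and the per-shell caps $m_j\le |B\setminus B(x_0,\sigma_j)|$ coming from measure preservation (with $m_j\triangleq|Q\cap \text{shell}_j|$), and these admit the configuration where essentially all of $|Q|$ sits in one shell of radius $\approx \overline\rho/e$ — the cap there is $\approx(1-e^{-2})|Q|$ — for which $\sum_j m_j\bigl(1+\ln(|B_j|/m_j)\bigr)\approx 2\,|Q|\,CV(t)|\ln r|^{1-\alpha}$. No H\"older in the shell index can beat its own worst case. The structural reason is that the gain from $V|\ln r|^{1-\alpha}$ down to $V$ must come from the $F$-weighted part of the $\ba$-norm acting on the oscillation piece as well, and John--Nirenberg, seeing only measures and the \textit{Bmo} bound, cannot access it.

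This is precisely what the paper's Whitney covering of the distorted set is designed to do. The set $\psi(B)$ (your $Q$, up to exchanging $\psi$ and $\psi^{-1}$) is covered by disjoint Whitney balls $O_j\subset\psi(B)$ with $r_j\approx d(O_j,\psi(B)^c)$; every local oscillation is then taken over a \emph{full} ball $2O_j$, with weights satisfying $\sum_j|O_j|\le |B|$, so no measure-ratio loss ever occurs; all of the distortion is charged to the drifts $|\av_{2O_j}f-\av_{\tilde B}f|$, which are estimated by dilating $O_j$ to $\frac{r_\psi}{r_j}O_j$, a ball comparable to $\tilde B=B(\psi(x_0),r_\psi)$, at the cost $F\bigl(\frac{\ln r_j}{\ln r_\psi}\bigr)$ — a ratio of log-radii on which the cancellation of $F$ at $1$ acts and, after the weight $|\ln r|^{\alpha}$, gives exactly $1+V(t)$. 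Measure preservation then enters only through the sharper count of Lemma \ref{equivalence} (the total mass $u_k$ of Whitney balls of scale $e^{-k}r$ is exponentially small in $k$), which feeds the Abel-summation Lemma \ref{maj12} for the tail $k>N$. Note finally that your assertion $(2)$ inherits the same gap: to compare $\av_{\psi^{-1}(B_2)}f$ with the average over a reference ball you must integrate $|f-c|$ over the distorted set $\psi^{-1}(B_2)$, which is again a small-measure subset of its circumscribed ball; in the paper this is done by reusing the Case-1 estimate \eqref{basse3} of part $(1)$, and the disappearance of the factor $1+V(t)$ in the final bound comes from the compensation $(1+V(t))|\ln r_t|^{-\alpha}\lesssim 1$, not from avoiding oscillation estimates. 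So your scheme is repairable, but only by replacing the concentric-shell/John--Nirenberg step with the Whitney decomposition, the dilation trick, and the boundary mass count.
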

Before giving the proof, some remarks are in order.
\begin{rema}
\begin{enumerate}
\item  According to the first result of the foregoing theorem, the estimate of the solution in the space $\baa$ does not involve  the weighted part of the space $\ba,$ which is only required for the initial data. 
\item The estimate of the second part of Theorem $\ref{decom}$ is subjected to a slight loss. Indeed, instead of $F$ we put $1+F$. This is due to the fact that we need some cancellations for the difference of two averages and to avoid this loss more sophisticated analysis should be carried out and we believe   that this loss  is a technical artifact.
\end{enumerate}
\end{rema}

\begin{proof}[Proof of Theorem $\ref{decom}$] 
{\bf$(1)$} The proof will be done in the spirit of the recent work \cite{BK2}. First we observe that the solution is given by $w(t)=f\circ\psi^{-1}(t)$, where $\psi$ is the flow associated to the vector field $u.$ Therefore the estimate in the \mbox{space $\baa$} reduces to the stability by the right composition with a homeomorphism preserving Lebesgue measure with the prescribed regularity given in Proposition \ref{prop}. Since the \mbox{flow $\psi$} and its inverse share the same properties and the estimates that will be involved along the proof, we prefer for the sake of simple notation to use in the composition with $\psi$ instead of $\psi^{-1}.$   \mbox{Let $B=B(x_0,r)$} be the ball of center $x_0$ and radius $r\in(0,\frac12).$ We intend to give a suitable estimate for the quantity
$$
\mathcal{I}_r\triangleq |\ln r|^\alpha \fint_{B}|f\circ\psi- \fint_{B}(f\circ\psi)|dx.
$$

To reach this goal  we  use in  a crucial way the local regularity of the flow stated before in \mbox{Proposition \ref{prop}.} The estimate of $\mathcal{I}_r$ will require some discussions depending on a threshold value for $r$ denoted by $r_t$.  The identification of $r_t$ is related  to hidden arguments that will be clarified during  the proof. To begin with, fix a sufficiently  large  constant $\delta > \max(\sqrt{2}, 2C)$ (where $C$ is given by Proposition \ref{prop}) and  define $r_t$ as   the unique solution  in the interval $(0,\frac12)$ of the following equation
\begin{equation}\label{eqss:01} \delta  r_te^{ \delta V(t) |\ln(r_t)|^{1-\alpha}} =r_t^{ \frac12}.
\end{equation}
The existence and uniqueness can be easily proven by studying the variations of the  function
 \begin{equation}\label{hh1}h(r)\triangleq  \delta r^{\frac12}\,e^{ \delta V(t) |\ln(r)|^{1-\alpha}},\, r\in(0,1/2)
 \end{equation}
and using the fact that $h(\frac12)>1$ since $\delta>{\sqrt{2}}.$ We point out that $h$ is non-decreasing in the interval $[0,r_t]$ and $h(r)\le 1$ in this range. We have also the bound
\begin{equation}\label{m12}
C^\prime+C^\prime V^{\frac1\alpha}(t)\le |\ln r_t|\le C+C V^{\frac1\alpha}(t),\quad\hbox{for some}\quad C, C^\prime>0.
\end{equation}
Indeed, set $X=-\ln r_t$ then from \eqref{eqss:01} and  Young inequality
\begin{eqnarray*}
X&=&2\ln \delta +2\delta V(t) X^{1-\alpha}\\
&\le&C_1+C_1 V^{\frac1\alpha}(t)+\frac12 X.
\end{eqnarray*}
This gives the estimate of the right-hand side of \eqref{m12}. For the left one it is apparent from the equation on $X$ and its positivity that
\begin{eqnarray*}
X&\geq &2\ln \delta \quad\hbox{and}\quad X\geq 2\delta V(t) X^{1-\alpha}.
%&\ge&2\ln C+2C V^{\frac1\alpha}(t)-\varepsilon V^{\frac1\alpha}(t)-C_\varepsilon X.
\end{eqnarray*}
Thus
$$
X\ge \ln \delta+ \frac{1}{2} (2\delta)^{\frac{1}{\alpha}} V^{\frac1\alpha}(t),
$$
which concludes the proof of (\ref{m12}). Before starting the computations for $\mathcal{I}_r$ we need to introduce the radius
\begin{equation}\label{rnot}
r_\psi\triangleq \delta\,r\, e^{\delta V(t) |\ln(r)|^{1-\alpha}}.
\end{equation}
Now we will check that for $r\in(0,r_t]$
\begin{equation}\label{eqss:1}
1\le\frac{|\ln r|}{|\ln r_\psi|}\le 2.
\end{equation}
The inequality of the left-hand side can be deduced  as follows. First it is obvious that $0<r<r_\psi$ and it remains to show that $r_\psi<\frac12$ whenever $r\in]0,r_t].$ For this purpose we show by  using simple arguments that   the function ${k}: r\mapsto r_\psi$ is non-decreasing in the \mbox{interval $(0,r_t].$} From this latter fact and \eqref{eqss:01} we find $r_\psi\le k(r_t)=r_t^{\frac12}\le \frac12$. Let us now move to  the second inequality of \eqref{eqss:1} and for this aim we start with studying the function
$$
g(x)=\frac{-x}{-x+a+b x^{1-\alpha}}, x\geq -\ln r_t ;\quad a\triangleq\ln \delta,\, b\triangleq\delta V(t). 
$$
We observe that the quotient $\frac{|\ln r|}{|\ln r_\psi|}$ coincides with $g(-\ln r)$. By easy computations we get
$$
g^\prime(x)=-\frac{a+b\alpha\,x^{1-\alpha}}{\big(-x+a+b x^{1-\alpha}\big)^2}<0.
$$
This yields in view of \eqref{eqss:01} and \eqref{m12}
\begin{eqnarray*}
g(x)&\le & g(-\ln r_t) \le \frac{\ln r_t}{\frac{\ln r_t}{ 2}} \le 2.
\end{eqnarray*}
\medskip
\noindent
The estimate of $\mathcal{I}_r$ depends whether the radius $r$ is smaller or larger than the critical \mbox{value  $r_t$.} 

{\centerline{\underline{\bf Case 1:} $0<r\le r_t .$ }}
\vspace{0,3cm}

As  \mbox{$\psi$}  is a  homeomorphism which preserves   Lebesgue  measure    then  \mbox{$\psi(B)$}  is an open connected %\footnote{We also have that  \mbox{$  \psi(B)^C=\psi(B^C)$}  and   \mbox{$\psi(\partial B)=\partial(\psi(B)).$}    }
set with  \mbox{$|\psi(B)|=|B|$}. Let us consider a Whitney covering of this open set $\psi(B)$, that consists in a collection of \mbox{balls  \mbox{$(O_j)_j$}} such that: 

\noindent \hspace{1cm} - The collection of double balls is a bounded covering:
$$
\psi(B)\subset \bigcup_j 2O_j.
$$

\noindent \hspace{1cm} - The collection is disjoint and for all  \mbox{$j$}, 
$$
O_j\subset \psi(B).
$$

\noindent \hspace{1cm} - The Whitney property is verified: the radius $r_j$ of $O_j$ satisfies
$$
r_{_j}\approx d(O_j, \psi(B)^c).
$$

We set  \mbox{$\tilde B\triangleq B(\psi(x_0), r_\psi)$} then according to Proposition \ref{prop} (and $\delta>2\rho(\alpha)$) we have  $\psi(B)\subset \tilde B$. It is easy to see from the invariance  of the Lebesgue measure by the flow that 
\begin{eqnarray*}
 \fint_{B}\Big|f\circ\psi- \fint_{B}(f\circ\psi)\Big|dx&=&\fint_{\psi(B)}\Big|f- \fint_{\psi(B)}f\Big|dx
\\
&\leq &    2  \fint_{\psi(B)}\Big|f- \fint_{\tilde B}f\Big|dx.
\end{eqnarray*}
Using the preceding notations 
\begin{eqnarray*}
\LL\fint_{\psi(B)}\Big|f- \fint_{\tilde B}f\Big|&\lesssim &\frac{\LL}{|B|}\sum_j |O_j| \, \fint_{2O_j}\Big|f- \fint_{\tilde B}f\Big|
\\
&\lesssim & \hbox{I}_1+\hbox{I}_2,
\end{eqnarray*}
with
\begin{eqnarray*}
\hbox{I}_1&\triangleq& \frac{\LL}{|B|}\sum_j |O_j| \, \fint_{2O_j}\Big|f- \fint_{2O_j} f\Big|\\
\hbox{I}_2&\triangleq& \frac{\LL}{|B|}\sum_j |O_j| \Big|\fint_{2O_j}f- \fint_{\tilde B} f\Big|.
\end{eqnarray*}
On one hand, since   \mbox{$\sum|O_j|\leq |B|$} and $r_j\le r<\frac12$ (due to $|O_j|\leq |\psi(B)|=|B|$)  then
\begin{eqnarray*}
\hbox{I}_1&\leq& \frac{1}{|B|}\sum_j |O_j|\frac{\LL}{|\ln(r_j)|^\alpha}\|f\|_{{\baa}}
\\
&\leq & \|f\|_{\baa} .	
\end{eqnarray*}
On the other hand, since $d(O_j, \tilde B) \leq r_\psi$ and $r_{\tilde B}=r_\psi$, it ensures that
$$ O_j \subset \frac{r_\psi}{r_j} O_j$$
and hence the two balls $Q_1\triangleq\frac{r_\psi}{r_j} O_j$ and $ \tilde{B}$ are comparable\footnote{Here we say that two balls $Q_1$ and $Q_2$ are comparable if $Q_1 \subset 4Q_2$ and $Q_2 \subset 4Q_1$.}.
This entails
$$ \Big|\fint_{Q_1} f - \av_{\tilde B}f  \Big| \lesssim \frac{1}{|\ln(r_\psi)|^\alpha} \|f\|_{\LMO}.$$
We point out that we have used the fact that for $0<r<r_t$ the radius $r_\psi$ of $\tilde B$ is smaller than $\frac12$.
Moreover according to the definition of the space $\ba$, it comes since $r_{Q_1}\leq \frac12$
\begin{eqnarray*}  \Big| \av_{ 2O_j} f  - \av_{Q_1} f  \Big| &\lesssim& \|f\|_{\ba}  F\left(\frac{\ln r_j}{\ln r_{Q_1}}\right) \\
 &\lesssim& \|f\|_{\ba}  F\left(\frac{\ln r_j}{ \ln r_{\psi}}\right).
\end{eqnarray*}
It follows that
\begin{eqnarray*} 
 \Big| \av_{ 2O_j} f  - \av_{\tilde B} f  \Big| &\lesssim& \|f\|_{\ba} \left( |\ln r_\psi|^{-\alpha}+ F\left(\frac{\ln r_j}{ \ln r_{\psi}}\right) \right).
\end{eqnarray*} 
Together with (\ref{eqss:1}) this estimate yields \begin{eqnarray*} 
\Big| \av_{ 2O_j} f  - \av_{\tilde B} f \Big| &\lesssim& \|f\|_{\ba} \left( |\ln r|^{-\alpha}+ F\left(\frac{\ln r_j}{ \ln r_{\psi}}\right) \right).
\end{eqnarray*}
Consequently,
\begin{eqnarray*}
\hbox{I}_2&\lesssim&  \|f\|_{\ba}+\|f\|_{\ba} \frac{|\ln r|^\alpha}{|B|} \left(\sum_j |O_j| F\left(\frac{\ln r_j}{\ln r_\psi}\right) \right).
\end{eqnarray*}   
For every  \mbox{$k\in\mathbb N$}  we set 
$$
u_k\triangleq\sum_{e^{-(k+1)}r< r_j\leq e^{-k}r} |O_j|,
$$
so that
\begin{eqnarray}
\label{eff}
\hbox{I}_2&\lesssim& \|f\|_{\ba} \left(1+ \frac{|\ln r|^\alpha}{|B|}\sum_{k\geq 0}u_k \ F\left(\frac{-1- k+\ln r}{\ln(r) +a+b|\ln r|^{1-\alpha}}\right)\right)\\
\nonumber&\triangleq& \|f\|_{\ba}+ \hbox{I}_3.
\end{eqnarray}
with
\begin{equation}\label{not11}
a\triangleq\ln \delta,\quad  b\triangleq\delta V(t). 
\end{equation}
The numbers $a$ and $b$  appeared before  in the definition of $r_\psi$ given in  \eqref{rnot}.
Let $N$ be  a real number that will be judiciously fixed later. We split the sum in the right-hand side of \eqref{eff} into two parts
$$
\hbox{I}_3=\sum_{k\leq N}(...)+\sum_{k> N}(.....)\triangleq\hbox{II}_{1}+\hbox{II}_{2}.
$$
Since  \mbox{$\sum u_k\leq |B|$} and $F$ is non-decreasing  then
\begin{eqnarray}
\label{ff}
\hbox{II}_{1}\lesssim  |\ln r|^\alpha\ F\left(\frac{1+N+|\ln r|}{|\ln(r)| -A-B|\ln r|^{1-\alpha}}\right).
\end{eqnarray}
 To estimate the term $\hbox{II}_2$ we need a refined bound for $u_k$  given below and  whose proof will be postponed to the end of this paper in \mbox{Lemma \ref{equivalence}} of the Appendix. 
\begin{equation}\label{precise}
u_k\lesssim  \delta r^2 e^{-k} e^{\delta (V(t)+1)(k-\ln(r))^{1-\alpha}}.
\end{equation}
By virtue of \eqref{precise} and  Lemma \ref{maj12} we get
\begin{eqnarray} \label{fff}
\nonumber \hbox{II}_{2}&\lesssim&  |\ln r|^\alpha  e^{-N} e^{b(N-\ln r)^{1-\alpha}}  F\left(\frac{1+N+|\ln r|}{|\ln r| -a-b|\ln r|^{1-\alpha}}\right)\\
&+& e^{-N} e^{b(N-\ln r)^{1-\alpha}}\frac{|\ln r|^\alpha}{|\ln r| -a-b|\ln r|^{1-\alpha}}\cdot 
\end{eqnarray}
So we choose $N=N(r)$ such that $e^{-N} e^{b(N-\ln r)^{1-\alpha}}=1$. Under this assumption we get
$$
\hbox{II}_{1}+\hbox{II}_{2}\lesssim  |\ln r|^\alpha\ F\left(\frac{1+N+|\ln r|}{|\ln r| -a-b|\ln r|^{1-\alpha}}\right)+\frac{|\ln r|^\alpha}{|\ln r| -a-b|\ln r|^{1-\alpha}}\cdot
$$ 
The condition on $N$ is also equivalent to
 $$N= 1+ b(N+|\ln r|)^{1-\alpha}.
 $$
Then from Young inequality
\begin{eqnarray}\label{ineqss}
N&\lesssim& 1+b^{\frac1\alpha}+b|\ln r|^{1-\alpha}
%&\lesssim &A+B|\ln r|^{1-\alpha}
\end{eqnarray}
and therefore
\begin{align*}
 \frac{1+N+|\ln r|}{|\ln r| -a-b|\ln r|^{1-\alpha}} -1& =\frac{1+N+a+b|\ln r|^{1-\alpha}}{|\ln r_\psi|}\\
 &\lesssim \frac{1+b^{\frac1\alpha}+b|\ln r|^{1-\alpha}}{|\ln r_\psi|}\cdot
\end{align*}
Using the {\it cancellation property} of $F$ at the point $1$, that is $\displaystyle{\sup_{x\in(0,1)}\frac{F(1+x)}{x}<\infty}$, together with \eqref{eqss:1}, it comes
\begin{align*}
|\ln r|^\alpha F\left( \frac{1+N+|\ln r|}{|\ln r| -a-b|\ln r|^{1-\alpha}} \right) & \lesssim |\ln r|^\alpha \frac{1+b^{\frac1\alpha}+b|\ln r|^{1-\alpha}}{|\ln r_\psi|}  \\
& \lesssim \frac{|\ln r|^\alpha+|\ln r|^\alpha b^{\frac1\alpha}+b|\ln r|}{|\ln r|}  \\
&\lesssim1+b+b^{\frac1\alpha}|\ln r|^{\alpha-1}.% \\
\end{align*} 
Since $r\in(0, r_t]$ and according to \eqref{m12} we find 
$$1+b+b^{\frac1\alpha}|\ln r|^{\alpha-1} \lesssim 1+V(t)
$$and so
\begin{align*}
|\ln r|^\alpha F\left( \frac{1+N+|\ln r|}{|\ln r| -a-b|\ln r|^{1-\alpha}} \right) & \lesssim \left(1+V(t)\right).% \\
\end{align*} 
It follows that
$$ \hbox{II}_1+\hbox{II}_2 \lesssim  1+ V(t)+\frac{|\ln r|^\alpha}{|\ln r| -a-b|\ln r|^{1-\alpha}}.$$
To estimate the last term we use \eqref{eqss:1} 
\begin{eqnarray*}
\frac{|\ln r|^\alpha}{|\ln r| -a-b|\ln r|^{1-\alpha}}&=&\frac{|\ln r|^\alpha}{|\ln r_\psi|}\\
&\leq&{|\ln r|^{\alpha-1}}\\
&\lesssim& 1.\end{eqnarray*}
 
 Finally, we get
 \begin{equation}\label{basse3}
 \sup_{0< r\le r_t}\left(\LL\fint_{\psi(B)}\Big|f- \fint_{\tilde B}f\Big|+\mathcal{I}_r\right)\lesssim \|f\|_{\ba} \left(1+V(t)\right).
 \end{equation}

\medskip
%\noindent
Let us now move to the second case.

\centerline{\underline{{\bf Case 2:}} $r_t\le r\leq \frac12$.}%and $|\ln(r)| \geq (2\gamma \beta)^{1-\beta}$.}} \\

\vspace{0,3cm}

According to  \eqref{m12}  
\begin{eqnarray}\label{tita1}
\nonumber |\ln r|&\le& |\ln r_t|\\
&\lesssim& 1+V^{\frac1\alpha}(t)
\end{eqnarray}
which yields in turn
\begin{equation}\label{basse1}
\LL\fint_{B}\Big|f\circ\psi- \fint_{B}f\circ\psi\Big|\lesssim \big(1+V(t)\big)\fint_{\psi(B)}|f|.
\end{equation}
Let $\tilde{O}_j $ denote the ball which is concentric  to $O_j$ and whose radius is equal to $1/2$. We can write by the definitions,
\begin{eqnarray*}
\fint_{\psi(B)}|f|&\le &\frac{1}{|B|}\sum_{j}|O_j|\fint_{2 O_j}\Big|f-\fint_{\tilde{O}_j }f\Big| +\frac{1}{|B|}\sum_{j\in\mathbb{N}}|O_j|\fint_{\tilde{O}_j} |f| \\
&\le& \frac{1}{|B|}\sum_{j}|O_j|\fint_{2 O_j}\Big|f-\fint_{\tilde{O}_j }f\Big| + \sup_{|B|=1}\fint_{B}|f| \\
&\le&\|f\|_{\ba} \frac{1}{|B|}\sum_{j}|O_j|F(-\ln {r_j})+\|f\|_{\baa}.
%&\le&\|f\|_{\ba} \frac{1}{|B|}\sum_{k\in\mathbb{N}}u_k \,F(k-\ln{r})+\|f\|_{\baa}.
\end{eqnarray*}
Now 
reproducing  the same  computations as for the first case leads to 
\begin{eqnarray*}
 \frac{1}{|B|}\sum_{j}|O_j| F(-\ln {r_j})&\le&
\frac{1}{|B|}\sum_{k\in\mathbb{N}}u_k F(k-\ln {r})\\
&\lesssim& F(N-\ln{r})\Big(1+ e^{-N} e^{b(N-\ln(r))^{1-\alpha}}\Big).
\end{eqnarray*}
This computation still holds as soon as $e^{-N} r \leq \ell(t)$, since we use Proposition \ref{prop} for the \mbox{scales $e^{-k}r$} with $ k\geq N$.
We choose $N$ such that $  e^{-N} e^{b(N-\ln(r))^{1-\alpha}}= 1$ and we check that this choice legitimates the previous estimate since $e^{-N}r \leq \ell(t)$.
Consequently, we get  from (\ref{ineqss}) and \eqref{tita1} 
$$ N+|\ln r| \lesssim 1+V^{\frac1\alpha}(t).$$
We then obtain
\begin{equation}\label{basse113}
\fint_{\psi(B)}|f|\lesssim F\big(2+V^{\frac1\alpha}(t)\big)\, \|f\|_{\ba}
\end{equation}
and 
\begin{eqnarray}\label{basse13}
\nonumber\mathcal{I}_r&\lesssim&\LL\fint_{\psi(B)}\Big|f\circ\psi- \fint_{\tilde B}f\Big|\\
&\lesssim&  \big(1+V(t)\big)F\big(2+V^{\frac1\alpha}(t)\big)\, \|f\|_{\ba}.
\end{eqnarray}
Finally, we have obtained for $r\in [r_t,1/2]$
$$ \mathcal{I}_r \lesssim \big(1+V(t)\big) F\big(2+V^{\frac1\alpha}(t)\big)\, \|f\|_{\ba}.$$

Putting together the estimates of the case $1$ and the case $2$ yields
\begin{equation}\label{basse213}
\|f\circ \psi\|_{\LMO}\lesssim \big(1+V(t)\big)F\big(2+V^{\frac1\alpha}(t)\big)\, \|f\|_{\ba}.
\end{equation}

{$\bf(2)$}  To deal with the second term in the  \mbox{$\ba$}-norm we will make use of  the arguments   developed  above for $\baa$ part.
Take   \mbox{$B_2=B(x_2,r_2)$}   and  \mbox{$B_1=B(x_1,r_1)$}  two balls  \mbox{with   \mbox{$r_1\leq 1$}}  and  \mbox{$2B_2\subset B_1$} and let us see how to estimate  the quantity
$$
\mathcal{J}\triangleq\frac{|\av_{B_2}f\circ\psi-\av_{B_1}f\circ\psi|}{ 1+F(\frac{\ln r_2}{\ln r_1})}\cdot
$$
There are different  cases to consider.

%We set  \mbox{$\tilde B_i:= B(\psi(x_i), r_\psi^i), i=1,2$}  and
\vspace{0,3cm}

{\centerline{\underline{\bf Case 1:}  $r_t\le r_1\le \frac12$.}}

Using \eqref{basse113} 
\begin{eqnarray*}
\frac{|\av_{B_1}f\circ\psi|}{1+ F(\frac{\ln r_2}{\ln r_1})}&\le&\av_{\psi(B_1)}|f|    \\
&\lesssim&F\big(2+V^{\frac1\alpha}(t)\big)\|f\|_{\ba} .
\end{eqnarray*}
If $r_2> r_t$ then by repeating the same arguments for the quantity involving $B_2$, it comes 
$$\mathcal{J} \lesssim F\big(2+V^{\frac1\alpha}(t)\big)\|f\|_{\ba}.$$
If $r_2\leq r_t$ then we estimate the average on $\psi(B_2)$  by using \eqref{basse3} and \eqref{m12}
\begin{align*}
 \av_{\psi(B_2)}|f| & \leq \av_{\psi(B_2)}\Big|f-\av_{\tilde{B}_2} f\Big| + \Big|\av_{\tilde{B}_2} f \Big| \\
 &\lesssim |\ln r_2|^{-\alpha}(1+V(t))\|f\|_{\ba}+ \Big|\av_{\tilde{B}_2} f \Big| \\
 &\lesssim \|f\|_{\ba}+ \Big|\av_{\tilde{B}_2} f \Big|,
\end{align*}
where \mbox{$\tilde B_i\triangleq  B(\psi(x_i), r_{i,\psi}), i=1,2$}  and $r_{i,\psi}$ is the radius associated to $r_i,$ which was introduced in \eqref{rnot}. It remains to treat the last term of the above inequality. For this goal we write
\begin{align*}
 \Big|\av_{\tilde{B}_2} f \Big|
 & \lesssim  \Big|\av_{\tilde{B}_2} f - \av_{{B}_{(\psi(x_2),1/2)}} f\Big|+\sup_{B, r=\frac12}\Big| \av_{B}f \Big| \\
 & \lesssim F(|\ln r_{2,\psi}|) \|f\|_{\ba}+\|f\|_{\baa}.
\end{align*}
This yields in view of \eqref{eqss:1} and the Definition \ref{def657}
\begin{align*}
 \frac{|\av_{B_2}f\circ\psi|}{1+ F(\frac{\ln r_2}{\ln r_1})}
 & \lesssim \Big(1+\frac{F(|\ln r_{2,\psi}|)}{1+F(\frac{\ln r_2}{\ln r_1})}\Big) \|f\|_{\ba}\\
 &\lesssim  \Big(1+\frac{F(|\ln r_2|)}{1+F(\frac{\ln r_2}{\ln r_1})}\Big) \|f\|_{\ba}\\
 &\lesssim \big( 1+F(\ln r_1)\big)\|f\|_{\ba}.
\end{align*}
Since $r_1\in (r_t, \frac12)$ then using \eqref{m12} we find
\begin{align*}
 \frac{|\av_{B_2}f\circ\psi|}{1+ F(\frac{\ln r_2}{\ln r_1})}
 &\lesssim F(2+V^{\frac1\alpha}(t))\|f\|_{\ba}.
\end{align*}
Finally we get for $r_2\le r_t$
$$
\mathcal{J} \lesssim F\big(2+V^{\frac1\alpha}(t)\big)\|f\|_{\ba}.$$
\vspace{0,3cm}
To achieve   the proof of the second part of Theorem \ref{decom}, it remains to analyze the last case:  
\centerline{\underline{\bf Case 2:} $0<r_1\le r_t$.}
\vspace{0,1cm}

 We decompose  $\mathcal{J}$ as follows:
$$
\mathcal{J}= \frac{\mathcal{J}_{1}+\mathcal{J}_{2}+\mathcal{J}_3}{{1+F(\frac{\ln r_2}{\ln r_1})}},
$$
with 
\begin{eqnarray*}
\mathcal{J}_{1}&\triangleq&  |\av_{\psi(B_2)}f-\av_{\tilde B_2}f|+ |\av_{\psi(B_1)}f-\av_{\tilde B_1}f| \\
\mathcal{J}_{2}&\triangleq&{|\av_{\tilde B_2}f-\av_{2\tilde B_1}f|}
 \\
\mathcal{J}_{3}&\triangleq&|\av_{\tilde B_1}f-\av_{2\tilde B_1}f|.
\end{eqnarray*}
The first term $J_1$ can be handled as for \eqref{basse3} and we get by \eqref{m12}
\begin{eqnarray*}
\Big|\av_{\psi(B_2)}f-\av_{\tilde B_2}f\Big|+\Big|\av_{\psi(B_1)}f-\av_{\tilde B_1}f\Big| &\lesssim& \|f\|_{\ba}\big(1+V(t)\big)\big(|\ln r_2|^{-\alpha}+|\ln r_1|^{-\alpha}\big)\\
&\lesssim & \|f\|_{\ba}\big(1+V(t)\big)|\ln r_t|^{-\alpha}\\
&\lesssim& \|f\|_{\ba}
\end{eqnarray*}
which gives in turn
\begin{eqnarray*}
\frac{\mathcal{J}_1}{ 1+F(\frac{\ln r_2}{\ln r_1})}& \lesssim& \|f\|_{\ba}.
\end{eqnarray*}
%If $\frac{\ln r_2}{\ln r_1}\geq 2$ then 
%$$
%\frac{J_1}{ F(\frac{\ln r_2}{\ln r_1})}\le  \frac{G(t)}{|\ln r_1|^{\alpha}}\le G(t).
%$g
Since  \mbox{$\tilde B_2\subset 2\tilde B_1$}   and  \mbox{$r_{2\tilde B_1}\le \frac12$},  then 
$$
\mathcal{J}_2\lesssim F\left(\frac{\ln r_{2,\psi}}{\ln r_{1,\psi}}\right) \|f\|_{\ba}.
$$
Hence we get from the property $(2)$ of the Definition \ref{def657} combined with \eqref{eqss:1} 
\begin{align*}
{\frac{\mathcal{J}_2  }{1+ F(\frac{\ln r_2}{\ln r_1} )}}& \leq \frac{F\left(\frac{\ln r_{2,\psi}}{\ln r_{1,\psi}}\right)}{1+F(\frac{\ln r_2}{\ln r_1} )}\|f\|_{\ba} \\
 &  \lesssim  \left(1+F\Big(\frac{\ln r_{2,\psi}}{\ln r_2}\frac{\ln r_1}{\ln r_{1,\psi}}\Big)\right)\|f\|_{\ba}\\
 &\lesssim  \|f\|_{\ba}.
 %&\lesssim |\ln c(\psi)|^\alpha F(\ln  c(\psi)) .
\end{align*}
Since $\tilde{B_1}$ and $2\tilde{B_1}$ are comparable and $r_{2\tilde B_1}\le \frac12$ we easily have
$$
\frac{\mathcal{J}_3}{1+F(\frac{\ln r_2}{\ln r_1} )}\lesssim \mathcal{J}_3\lesssim  \|f\|_{{\ba}}.
$$  
The proof of Theorem \ref{decom} is now achieved.
\end{proof}
\subsection{Application to Euler equations}
In this section we shall deal with the local and  global well-posedness theory for the two dimensional Euler equations  in the space $\ba$.  This project will be performed through the use of the logarithmic estimate developed in Theorem \ref{decom}. We shall now state a more  general result than Theorem \ref{main}. \begin{Theo}
 \label{apriori} Let  \mbox{$\omega_0\in \ba\cap L^p$} with  $\alpha\in(0,1)$ and $p\in(1,2)$. Then, 
 \begin{enumerate}
 \item If  $F$ belongs to the class $\mathcal{A}$, there exists $T>0$ such that the system \eqref{E}  admits a unique local solution 
 $$
 \omega \in L^\infty([0,T]; {\it L^\alpha mo}_{1+F}).
 $$
 \item If $F$ belongs to the class $\mathcal{A}^\prime$, the    system \eqref{E}  admits a unique global solution 
 $$\omega \in L^\infty_{\textnormal{loc}}(\RR_+; {\it L^\alpha mo}_{1+F}).
 $$

 \end{enumerate}
 \end{Theo}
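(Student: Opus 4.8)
The plan is to reduce the whole statement to an \emph{a priori} estimate for smooth solutions, after which existence follows from a regularization--compactness scheme and uniqueness from the log-Lipschitz regularity of the velocity. So I would take $\omega$ a smooth (hence global) solution of \eqref{E} with datum $\omega_0$, and set $V(t)\triangleq\int_0^t\|u(\tau)\|_{L^{1-\alpha}L}\,d\tau$. Since the flow of $u$ preserves Lebesgue measure, \eqref{vorts1} gives $\|\omega(t)\|_{L^p}=\|\omega_0\|_{L^p}$ for every $t$. Proposition \ref{coro} together with this conservation yields $V^\prime(t)=\|u(t)\|_{L^{1-\alpha}L}\leq C(\|\omega(t)\|_{\LMO}+\|\omega_0\|_{L^p})$, while the first assertion of Theorem \ref{decom}, applied with $w=\omega$ and $f=\omega_0$, controls the oscillation by the datum alone. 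Absorbing $\|\omega_0\|_{\ba\cap L^p}$ into a single constant $C_0$, I would arrive at the closed inequality
\begin{equation*}
V^\prime(t)\leq C_0\,(1+V(t))\,F\big(2+V^{\frac1\alpha}(t)\big)+C_0 .
\end{equation*}

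The main step is then to recast this as an Osgood inequality, and the natural change of unknown is $\Phi(t)\triangleq(2+V(t))^{\frac1\alpha}$, which is non-decreasing, satisfies $\Phi(0)=2^{\frac1\alpha}>1$, and has no singularity at the origin. Using $V^{\frac1\alpha}\leq\Phi$ together with the slow-growth property of $F\in\mathcal{A}$ one gets $F(2+V^{\frac1\alpha})\lesssim 1+F(\Phi)$, and $1+V\leq\Phi^\alpha$; substituting $V^\prime=\alpha\Phi^{\alpha-1}\Phi^\prime$ and discarding the remaining lower-order term (which is $\lesssim\Phi^{1-\alpha}\leq\Phi$) would leave
\begin{equation*}
\Phi^\prime(t)\leq C_1\,\Phi(t)\,\big(1+F(\Phi(t))\big).
\end{equation*}
For the local statement (1), where only $F\in\mathcal{A}$ is assumed, I would compare $\Phi$ with the solution of $\dot y=C_1 y(1+F(y))$; its right-hand side is locally Lipschitz since $F$ is $C^1$ with bounded derivative (property (3) of $\mathcal{A}$), so this ODE has a solution on a maximal interval $[0,T^\star)$ with $T^\star>0$, whence $V$, and by the second assertion of Theorem \ref{decom} also $\|\omega(t)\|_{\bas}\lesssim\|\omega_0\|_{\ba}F(2+V^{\frac1\alpha}(t))$, remain bounded on $[0,T]$ for every $T<T^\star$. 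For the global statement (2), I would invoke the Osgood Lemma \ref{osgood1} with $\Gamma=1+F$ and $\gamma\equiv C_1$: as $F\to+\infty$, the defining condition $\int_2^\infty\frac{dx}{xF(x)}=+\infty$ of the class $\mathcal{A}^\prime$ is exactly $\lim_{y\to\infty}\mathcal{M}(y)=+\infty$ for $\mathcal{M}(y)=\int_{\Phi(0)}^y\frac{dx}{x(1+F(x))}$, so $\Phi(t)\leq\mathcal{M}^{-1}(\mathcal{M}(\Phi(0))+C_1 t)$ stays finite on every bounded interval; fed back into Theorem \ref{decom}(2), this bounds $\|\omega(t)\|_{\bas}$ locally uniformly in time.

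For existence I would proceed along classical lines: regularize the datum into smooth, rapidly decaying approximations $\omega_0^n$, noting that by Remark \ref{rmq67}(3) and Young's inequality $\|\omega_0^n\|_{\ba\cap L^p}\leq\|\omega_0\|_{\ba\cap L^p}$, so the smooth global solutions $\omega^n$ furnished by the classical two-dimensional theory obey the estimates above \emph{uniformly} in $n$. The uniform control of $u^n$ in $L^\infty_t(L^{1-\alpha}L)$ gives spatial equicontinuity, and, jointly with the uniform $L^p$ and $\bas$ bounds, should allow the extraction (Arzel\`a--Ascoli plus a diagonal argument) of a subsequence whose velocities converge locally uniformly and whose vorticities converge weakly-$\ast$. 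Passing to the limit in the weak transport formulation of \eqref{vorts1} would produce a solution $\omega\in L^\infty_{\textnormal{loc}}(\bas\cap L^p)$, membership in $\bas\cap L^p$ following from lower semicontinuity of these norms.

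The hard part will be uniqueness, since the velocity genuinely lies below the Lipschitz class. Given two solutions issued from $\omega_0$, both velocities belong to $L^\infty_t(L^{1-\alpha}L)$ and are therefore log-Lipschitz with the Osgood modulus $r\mapsto r|\ln r|^{1-\alpha}$; moreover, interpolating $L^p$ against $\LMO$ should place each vorticity in $L^2$ (here $p<2$ is used). I would then run a Yudovich-type argument: controlling the velocity difference by the vorticity difference through Proposition \ref{coro} and estimating the latter via the log-Lipschitz bound on the velocity, one obtains an integral inequality to which Lemma \ref{osgood1} applies, and the Osgood character of the modulus forces the two flows, hence the two vorticities $\omega_i=\omega_0\circ\psi_i^{-1}$, to coincide. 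The points needing care are that $p<2$ rules out a direct $L^2$ energy estimate on $\omega$ itself (whence the interpolation step), and that all constants above depend only on $\|\omega_0\|_{\ba\cap L^p}$, which is exactly what makes the existence time in (1) and the global bound in (2) depend solely on the size of the datum. Finally, Theorem \ref{main} is recovered as the special case $F=\ln\in\mathcal{A}^\prime$.
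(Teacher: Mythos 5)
Your proposal is correct and follows essentially the same route as the paper: both reduce the theorem to the a priori estimate obtained by combining Theorem \ref{decom} with Proposition \ref{coro} and the conservation of $\|\omega(t)\|_{L^p}$, close the resulting inequality for $V$ locally using the growth/Lipschitz properties of the class $\mathcal{A}$ and globally via the Osgood Lemma \ref{osgood1} under the class-$\mathcal{A}^\prime$ condition, and then feed the bound on $V$ back into Theorem \ref{decom} to control $\|\omega(t)\|_{\bas}$. Your only deviations are cosmetic --- a differential rather than integral inequality, the substitution $\Phi=(2+V)^{\frac{1}{\alpha}}$ so that Osgood is applied with $\Gamma=1+F$ instead of $\Gamma(x)=F(2+x^{\frac{1}{\alpha}})$, and an ODE-comparison rather than a bootstrap for the local part --- while the existence and uniqueness machinery you sketch is precisely what the paper omits and defers to \cite{BK2}.
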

 \begin{proof} 
  The proof is based on the establishment of the {\it  a priori estimates} which are the cornerstone for  the existence and the  uniqueness parts. Here we omit the details about the existence and the uniqueness  which are classical and some of their elements  
  can be found for example in the paper \cite{BK2}.  
  
 $ {\bf (1)}$ Using Theorem \ref{decom} one has
 \begin{equation}\label{formal1}
  \|\omega(t)\|_{\baa\cap L^p}\lesssim \|\omega_0\|_{\ba \cap L^p}\left(1+V(t) F\big(2+V^{\frac{1}{\alpha}}(t) \big)\right)
 \end{equation}
  with $ \displaystyle{V(t)=\int_0^t\|u(\tau)\|_{L^{1-\alpha} L}d\tau.}$ Combining this estimate with Proposition \ref{coro} implies after integration in time
  \begin{equation}\label{linea1}
  V(t)\lesssim\|\omega_0\|_{\ba\cap L^p}\left(t+\int_0^t V(\tau) F(2+V^{\frac1\alpha}(\tau))d\tau\right).
  \end{equation}
  According to the Remark \ref{rmq23} the function $F$ has at most a polynomial growth: $F(2+x)\lesssim 1+x^{\beta}$. Therefore
  $$
    V(t)\lesssim\|\omega_0\|_{\ba\cap L^p}\Big(t+ t\, V(t)+t \,V^{1+\frac{\beta}{\alpha}}(t) \Big)
  $$
  and consequently we can find $T\triangleq T(\|\omega_0\|_{\ba\cap L^p})>0$  such that 
  $$
  \forall t\,\in [0,T],\quad V(t)\le 1.
  $$
  Plugging this estimate into \eqref{formal1} gives
  $$
  \|\omega(t)\|_{\baa\cap L^p}\lesssim  \|\omega_0\|_{\ba\cap L^p}. 
  $$
  Now from Theorem \ref{decom} we get also
  $$
   \|\omega(t)\|_{{\it L^\alpha mo}_{1+F}\cap L^p}\lesssim  \|\omega_0\|_{\ba\cap L^p}. 
  $$
  ${\bf{(2)}}$
Fix $T>0$ an arbitrary number, then from \eqref{linea1} we deduce
$$
\forall\, t\in [0,T],\quad  V(t)\le C\|\omega_0\|_{\ba\cap L^p}T+C\|\omega_0\|_{\ba \cap L^p} \Big( \int_0^t V(\tau) F(2+V^{\frac1\alpha}(\tau))d\tau \Big)
$$
and introduce the function $\mathcal{M}:[a,+\infty[\to [0,+\infty[$ defined by
$$
\mathcal{M}(y)=\int_{a}^{y}\frac{1}{x\, F(2+x^{\frac1\alpha})}dx,\quad a=\inf(A_T,1)\,\quad \, A_T=C\|\omega_0\|_{\ba\cap L^p}\, T.
$$
Since $F$ belongs to the class $\mathcal{A}^\prime$ we can easily check that
$$
\int_{A_T}^{+\infty}\frac{1}{x\, F(2+x^{\frac1\alpha})}dx=+\infty.
$$
Therefore applying Lemma \ref{osgood1}  
$$
 \forall t\in[0,T],\quad V(t)\le \mathcal{M}^{-1}\Big( \mathcal{M}(A_T)+C\|f\|_{\ba} t \Big).
$$
This gives the global a priori estimates
$$
\forall t\geq 0,\quad V(t)\le \mathcal{M}^{-1}\Big( \mathcal{M}(A_t)+C\|f\|_{\ba} t\Big).
$$
Inserting this estimate into \eqref{formal1} allows to get a global estimate for vorticity. Hence, there exists a continuous function  $G:\R_+\to \R_+$ related to $\mathcal{M}$ such that
\begin{equation}\label{basse313}
\|\omega(t)\|_{\baa\cap L^p}\le G(t).
\end{equation}
According to Theorem \ref{decom} and the preceding estimate
$$
\|\omega(t)\|_{\it{L^\alpha mo}_{1+F}}\le G(t).
$$
This concludes the a priori estimates.
\end{proof}

\appendix{}
\section{Technical lemmata}
We will prove the following lemma used before in the inequality \eqref{precise}.

\begin{Lemm} 
\label{equivalence}
There exists a universal implicit constant  such that for $r\in(0,r_t]$ and for $k\in \mathbb{N}$,%$ large enough (as soon as $e^{-k} e^{\gamma(k-\ln(r))^\beta}< \frac{1}{2}$ for example)
$$
u_k\lesssim  \delta r^2 e^{-k} e^{\delta (V(t)+1)(k-\ln(r))^{1-\alpha}}.
$$
\end{Lemm}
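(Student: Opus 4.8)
The plan is to control $u_k$ by the Lebesgue measure of a thin boundary layer of $\psi(B)$ and then pull this layer back, through the measure-preserving flow, to an annulus inside the ball $B=B(x_0,r)$. Since the Whitney balls $O_j$ are pairwise disjoint and contained in $\psi(B)$, one first writes $u_k=\big|\bigcup_{e^{-(k+1)}r<r_j\le e^{-k}r}O_j\big|$. The Whitney property $r_j\approx d(O_j,\psi(B)^c)$ shows that every $z$ lying in such an $O_j$ satisfies $d(z,\psi(B)^c)\lesssim r_j\approx e^{-k}r=:s_k$, so that $\bigcup O_j\subset A_k\triangleq\{z\in\psi(B):d(z,\psi(B)^c)\lesssim s_k\}$ and hence $u_k\le|A_k|$.

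The core step is to show that $\psi^{-1}(A_k)$ is contained in a thin annulus of $B$. Introduce the increasing profile $\Phi(s)\triangleq s\,e^{CV(t)|\ln s|^{1-\alpha}}$, with $C$ the constant of Proposition \ref{prop}, so that $\Phi(\ell(t))=\frac12$. Given $z=\psi(w)\in A_k$ with $w\in B$, set $\rho\triangleq d(w,B^c)=r-|w-x_0|$. For any point $p$ with $|p-x_0|\ge r$ one has $|w-p|\ge\rho$, and the regularity of the inverse flow in Proposition \ref{prop} gives $|w-p|\le\Phi(|\psi(w)-\psi(p)|)$ whenever the relevant scale is below $\ell(t)$; far-away points are harmless since then $|\psi(w)-\psi(p)|\ge\ell(t)\ge\Phi^{-1}(\rho)$. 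As $\psi(B)^c=\psi(\{|\cdot-x_0|\ge r\})$, taking the infimum over such $p$ yields the lower bound $d(z,\psi(B)^c)\ge\Phi^{-1}(\rho)$. Combined with the layer constraint $d(z,\psi(B)^c)\lesssim s_k$, this forces $\Phi^{-1}(\rho)\lesssim s_k$, that is $\rho\lesssim\Phi(s_k)$. Therefore $\psi^{-1}(A_k)\subset\{w\in B:d(w,B^c)\lesssim\Phi(s_k)\}$, whose measure is $\lesssim r\,\Phi(s_k)$; this stays true even when $\Phi(s_k)\ge r$, the set then being all of $B$ with measure $\pi r^2\le\pi r\,\Phi(s_k)$.

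Using the invariance of Lebesgue measure under $\psi$ and the identity $|\ln(e^{-k}r)|=k-\ln r$ for $r<1$, one then obtains
\[
u_k\le|A_k|=|\psi^{-1}(A_k)|\lesssim r\,\Phi(s_k)=r^2e^{-k}\,e^{CV(t)(k-\ln r)^{1-\alpha}}.
\]
Since $C\le\delta$ and $V(t)\le V(t)+1$, the right-hand side is bounded by $\delta\,r^2e^{-k}e^{\delta(V(t)+1)(k-\ln r)^{1-\alpha}}$, which is exactly the claimed estimate.

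I expect the only delicate point to be the rigorous use of Proposition \ref{prop} for the inverse flow, namely producing the \emph{lower} bound $d(z,\psi(B)^c)\ge\Phi^{-1}(\rho)$ and checking that all scales $s_k=e^{-k}r$ with $r\le r_t$ at which the flow estimate is invoked remain in its range of validity. These are precisely the scales already treated in Case 1 of the proof of Theorem \ref{decom}, so the admissibility conditions, governed by $r\le r_t$ and $\Phi(\ell(t))=\frac12$, are inherited here, and any boundary-constant discrepancy is absorbed by the generous constants $\delta$ and $V(t)+1$ appearing in the statement.
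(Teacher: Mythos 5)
Your proof is correct and follows essentially the same route as the paper: bound $u_k$ by the measure of a Whitney boundary layer of $\psi(B)$, pull it back through the measure-preserving flow, and use the regularity of $\psi^{-1}$ from Proposition \ref{prop} to trap the preimage in an annulus of $B$ of width $\sim e^{-k}r\,e^{\delta(V(t)+1)(k-\ln r)^{1-\alpha}}$, whose area gives the claim. The only cosmetic difference is that you invert the profile $\Phi$ and take an infimum over all of $B^c$ (handling far points separately), whereas the paper uses $\psi(B)^c=\psi(B^c)$ to reduce to a nearby boundary point $y\in\partial B$ and applies $\Phi$ directly; both versions require the same bookkeeping (choosing $\delta$ large to absorb the Whitney constant $c_0$ and checking the scales stay below $\ell(t)$), which you correctly flag.
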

\begin{proof}
If we denote by   \mbox{$c_0 \geq 1$}  the implicit constant appearing in Whitney Lemma, then 
$$
u_k\leq \Big|\Big\{ y\in \psi(B)\ \backslash \ d(y, \psi(B)^c)\leq c_0e^{-k}r\Big\}\Big|.
$$
The preservation of  Lebesgue  measure by  \mbox{$\psi$}  yields
$$
 \Big|\Big\{ y\in \psi(B) \ \backslash \ d(y, \psi(B)^c)\leq c_0 e^{-k}r\big\}\big|=\Big|\Big\{ x\in B \ \backslash \ d(\psi(x), \psi(B)^c)\leq c_0 e^{-k}r\Big\}\Big|.
$$
Since  \mbox{$  \psi(B)^c=\psi(B^c)$}  then
$$
u_k\leq \Big|\Big\{ x\in B \ \backslash \ d(\psi(x), \psi(B^c))\leq c_0 e^{-k}r\Big\}\Big|.
$$
We set
  $$D_k=\Big\{ x\in B \ \backslash \ d(\psi(x), \psi(B^c))\leq c_0 e^{-k}r\Big\}.
  $$
Since  \mbox{$\psi(\partial B)$}  is the frontier of  \mbox{$\psi(B)$}  and  \mbox{$d(\psi(x), \psi(B^c))=d(\psi(x), \partial \psi(B))$}  then
$$
D_k\subset \Big\{ x\in B \ \backslash \ \exists y\in \partial B \;{\rm with}\; |\psi(x)- \psi(y)|\leq c_0 e^{-k}r\Big\}.
$$
The regularity of $\psi^{-1}$ (Proposition \ref{prop}) implies since $c_0 e^{-k}r\leq c_0r_t \lesssim \ell(t)$
$$
D_k\subset \Big\{ x\in B \ \backslash \  \exists y\in \partial B: |x- y|\le c_0 e^{-k}r e^{ \delta(V(t)+1)(k-\ln(r))^{1-\alpha}}\Big\}.
$$
Here we choose $\delta$ large enough such that $\delta>\ln(c_0)$ and $\delta>c_0$.
Thus,  \mbox{$D_k$}  is contained in the annulus
$$\mathcal A=\Big\{ x\in B \ \backslash \  d(x,\partial B) \le \delta e^{-k}r e^{ \delta (V(t)+1)(k-\ln(r))^{1-\alpha}}\Big\}$$  and so (since we are in dimension $2$)
$$
u_k\leq |D_k|\lesssim  \delta r^2 e^{-k} e^{\delta (V(t)+1)(k-\ln(r))^{1-\alpha}},$$
as claimed.
\end{proof}
We conclude this paper by the following result which has been used in several places.
\begin{Lemm}\label{maj12}
Let $\alpha\in]0,1[, A,B,C,D>0$ and $F: [1,+\infty[\to\R_+$ be a differentiable nondecreasing function such that
$$
C>D\quad\hbox{and}\quad \|F^\prime\|_{L^\infty}\triangleq M<+\infty.
$$ 
Consider the sequence
$$
w_n\triangleq\sum_{k\geq n}e^{-k+A(k+B)^{1-\alpha}} F\left(\frac{k+C}{D}\right).
$$
Assume that
$$\frac{A(1-\alpha)}{(n+B)^\alpha}\leq \frac14,
$$
 then
$$
w_n\le 4 e^{-n+A(n+B)^{1-\alpha}} F(\frac{n+C}{D})+ \frac{16M}{D} e^{-n+A(n+B)^{1-\alpha}}. 
$$
\end{Lemm}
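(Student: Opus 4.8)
The plan is to decouple the slowly varying factor $F$ from the exponential weight and then reduce the whole estimate to two explicit geometric sums. I set $b_k\triangleq e^{-k+A(k+B)^{1-\alpha}}$, so that $w_n=\sum_{k\ge n}b_k\,F\!\left(\frac{k+C}{D}\right)$. The heart of the argument is the observation that, under the hypothesis $\frac{A(1-\alpha)}{(n+B)^\alpha}\le\frac14$, the weights $b_k$ decay geometrically for $k\ge n$ with a ratio bounded away from $1$, uniformly in $k$.

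First I would estimate the ratio $\frac{b_{k+1}}{b_k}=e^{-1}\,e^{A\left[(k+1+B)^{1-\alpha}-(k+B)^{1-\alpha}\right]}$. Since the map $x\mapsto(x+B)^{1-\alpha}$ is concave with derivative $(1-\alpha)(x+B)^{-\alpha}$ decreasing in $x$, the mean value theorem yields $(k+1+B)^{1-\alpha}-(k+B)^{1-\alpha}\le(1-\alpha)(k+B)^{-\alpha}\le(1-\alpha)(n+B)^{-\alpha}$ for every $k\ge n$. The standing hypothesis then forces $A\left[(k+1+B)^{1-\alpha}-(k+B)^{1-\alpha}\right]\le\frac14$, whence $\frac{b_{k+1}}{b_k}\le e^{-3/4}=:q<1$ for all $k\ge n$. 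An immediate induction gives $b_k\le q^{\,k-n}\,b_n$.

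Next I would peel off $F$ using its Lipschitz bound. Because $C>D$, for every $k\ge 0$ the point $\frac{k+C}{D}$ exceeds $1$ and thus lies in the domain of $F$; hence the mean value theorem together with $0\le F'\le M$ gives $F\!\left(\frac{k+C}{D}\right)\le F\!\left(\frac{n+C}{D}\right)+\frac{M}{D}(k-n)$ for $k\ge n$. Substituting this into $w_n$ splits it as $F\!\left(\frac{n+C}{D}\right)\sum_{k\ge n}b_k+\frac{M}{D}\sum_{k\ge n}(k-n)\,b_k$. Using $b_k\le q^{\,k-n}b_n$ I bound the two sums by $b_n\sum_{j\ge 0}q^j=\frac{b_n}{1-q}$ and $b_n\sum_{j\ge 0}j\,q^j=\frac{q\,b_n}{(1-q)^2}$. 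With $q=e^{-3/4}$ a direct numerical check shows $\frac{1}{1-q}<4$ and $\frac{q}{(1-q)^2}<16$, which gives exactly the announced inequality with $b_n=e^{-n+A(n+B)^{1-\alpha}}$.

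The only genuinely delicate point is the uniform geometric ratio of the second step: the estimate lives or dies on extracting the full gain $e^{-1}$ per step from the factor $e^{-k}$ while keeping the growth of $e^{A(k+B)^{1-\alpha}}$ under control, and this is precisely what the quantitative assumption $\frac{A(1-\alpha)}{(n+B)^\alpha}\le\frac14$ secures. The constants $4$ and $16$ are far from sharp---the true values of $\frac{1}{1-q}$ and $\frac{q}{(1-q)^2}$ are about $1.9$ and $1.7$---so the threshold $\frac14$ is merely a convenient margin; any fixed bound strictly below $1$ for the ratio would suffice after adjusting the final constants.
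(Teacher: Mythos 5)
Your proof is correct, and it takes a genuinely different route from the paper's. The paper proceeds by Abel summation: writing $R_n=\sum_{k\ge n}e^{-k}$ and $v_n=e^{A(n+B)^{1-\alpha}}F\left(\frac{n+C}{D}\right)$, it expands $w_n=R_nv_n+\sum_{k\ge n+1}R_k(v_k-v_{k-1})$, bounds the increments $|v_k-v_{k-1}|$ by the mean value theorem, and arrives at an inequality in which $w_n$ appears on both sides; the smallness hypothesis $\frac{A(1-\alpha)}{(n+B)^\alpha}\le\frac14$ is then used to absorb the term $\frac{A(1-\alpha)}{(1-e^{-1})(n+B)^\alpha}\,w_n$ into the left-hand side, and the same absorption is run a second time with $F\equiv 1$ to control the residual sum $\sum_{k\ge n}e^{-k+A(k+B)^{1-\alpha}}$. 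You instead use the same smallness hypothesis, via the mean value theorem applied to the exponent, to extract a uniform geometric ratio $b_{k+1}/b_k\le e^{-3/4}$ for the weights $b_k=e^{-k+A(k+B)^{1-\alpha}}$, then linearize $F$ along the sum through its Lipschitz bound $F\left(\frac{k+C}{D}\right)\le F\left(\frac{n+C}{D}\right)+\frac{M}{D}(k-n)$, and close with two explicit geometric series. Your argument buys simplicity and sharper constants (roughly $1.9$ and $1.7$ in place of $4$ and $16$), and it sidesteps a minor subtlety of the absorption scheme, namely that one must know a priori that $w_n<+\infty$ before moving it across the inequality (true here because $F$ grows at most linearly, but left implicit in the paper); your geometric comparison establishes convergence and the bound in one stroke. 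The paper's Abel-summation route, for its part, only manipulates partial sums of $e^{-k}$ and discrete increments of $v_k$, so it is the kind of argument that survives when the summand does not admit a stepwise ratio uniformly bounded away from $1$; for the lemma as stated, however, your more elementary proof is fully sufficient.
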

\begin{proof}
Let $R_n:=\sum_{k\geq n}e^{-k}$ and $v_n:=e^{A(n+B)^{1-\alpha}} F(\frac{n+C}{D})$. According to Abel's formula
$$
w_n = R_n v_n + \sum_{k\geq n+1}R_k(v_k-v_{k-1}).
$$
It is clear that $0<R_n \le\frac{e^{-n}}{1-e^{-1}}$ and
$$
w_n\le\frac{1}{{1-e^{-1}}} e^{-n+A(n+B)^{1-\alpha}} F(\frac{n+C}{D})+\frac{1}{1-e^{-1}}\sum_{k\geq n+1}e^{-k}|v_k-v_{k-1}|.
$$
It remains to estimate the last sum. For this purpose we use the mean value theorem combined with the nondecreasing property of $F$
\begin{eqnarray*}
|v_k-v_{k-1}|\le \frac{A(1-\alpha)}{(k-1+B)^\alpha} e^{A(k+B)^{1-\alpha}} F(\frac{k+C}{D})+\frac{M}{D}e^{A(k+B)^{1-\alpha}}.
\end{eqnarray*}
Consequently
\begin{eqnarray*}
\sum_{k\geq n+1}R_k|v_k-v_{k-1}|&\le& \frac{A(1-\alpha)}{(1-e^{-1})(n+B)^\alpha} w_{n+1}+\frac{M}{(1-e^{-1})D}\sum_{k\geq n+1} e^{-k+A(k+B)^{1-\alpha}}\\
&\le&\frac{A(1-\alpha)}{(1-e^{-1})(n+B)^\alpha} w_{n}+\frac{M}{(1-e^{-1})D}\sum_{k\geq n} e^{-k+A(k+B)^{1-\alpha}}.
\end{eqnarray*}
This leads to 
$$
w_n\le \frac{1}{{1-e^{-1}}} e^{-n+A(n+B)^{1-\alpha}} F(\frac{n+C}{D})+\frac{A(1-\alpha)}{(1-e^{-1})(n+B)^\alpha} w_n+\frac{M}{(1-e^{-1})D}\sum_{k\geq n} e^{-k+A(k+B)^{1-\alpha}}. 
$$
Reproducing the same computations with $F(x)=1, M=0,$ we get
$$
\sum_{k\geq n} e^{-k+A(k+B)^{1-\alpha}}\le  \frac{1}{{1-e^{-1}}} e^{-n+A(n+B)^{1-\alpha}} +\frac{A(1-\alpha)}{(1-e^{-1})(n+B)^\alpha} \sum_{k\geq n} e^{-k+A(k+B)^{1-\alpha}}.$$
Assuming that
$$
\frac{A(1-\alpha)}{(1-e^{-1})(n+B)^\alpha}\leq \frac12,
$$
we get
$$
\sum_{k\geq n} e^{-k+A(k+B)^{1-\alpha}}\le  \frac{2}{{1-e^{-1}}} e^{-n+A(n+B)^{1-\alpha}} 
$$
and
$$
w_n\le \frac{2}{{1-e^{-1}}} e^{-n+A(n+B)^{1-\alpha}} F(\frac{n+C}{D})+ \frac{4M}{(1-e^{-1})^2D} e^{-n+A(n+B)^{1-\alpha}}. 
$$
Since $\frac{1}{1-e^{-1}}\le 2$ we deduce
$$
w_n\le 4 e^{-n+A(n+B)^{1-\alpha}} F(\frac{n+C}{D})+ \frac{16M}{D} e^{-n+A(n+B)^{1-\alpha}} .
$$
\end{proof}


\begin{thebibliography}{9999}

\bibitem{bah-ch-dan}  H.~ Bahouri, J-Y.~  Chemin and R.~ Danchin, {\it Fourier Analysis
and Nonlinear Partial Differential Equations}, Grundlehren der mathematischen Wissenschaften 343.

\bibitem{BC} H. Bahouri and J-Y. Chemin, {\it Equations de transport relatives \`a des champs de vecteurs non-lipschitziens et m\'ecanique des fluides},
Arch. Rational Mech. Anal \textbf{127} (1994) 159--181.

\bibitem{Beale}
 J. T.~ Beale, T.~ Kato and  A.~ Majda, {\it Remarks on the Breakdown of Smooth Solutions 
for the  \mbox{$3D$}  Euler Equations}, Comm.   Math.   Phys.    {\bf 94} (1984) 61--66.   

\bibitem{BK}  F.~ Bernicot and S.~ Keraani, {\it  Sharp constants for composition with a measure-preserving map}, preprint arXiv:1204.6008 (2012).

\bibitem{BK2}  F.~ Bernicot and S.~ Keraani, {\it  On the global well-posedness of the 2D Euler equations for a large class of Yudovich type data}, preprint arXiv:1204.6006 (2012).

\bibitem{Ch} D.~ Chae, {\it  Weak solutions of 2D Euler equations with initial vorticity in LnL}. J. Diff. Eqs., {\bf 103} (1993), 323--337.

\bibitem{Cha2} D.~ Chae, {\it  Local existence and blow-up criterion for the Euler equations in the Besov spaces.} Asymptotic Analysis {\bf 38} (2004) 339--358.

\bibitem{Ch1} J.-Y. Chemin, {\it Fluides Parfaits Incompressibles}, Ast\'erisque 230 (1995); {\it Perfect Incompressible Fluids},
transl. by I. Gallagher and D. Iftimie, Oxford Lecture Series in Mathematics and Its Applications, Vol. {\bf 14},
Clarendon Press-Oxford University Press, New York (1998).

\bibitem{De} J.-M.~ Delort, {\it  Existence de nappes de tourbillon en dimension deux}, J. Amer. Math. Sot., Vol. {\bf 4} (1991)
 553--586. 

\bibitem{DM}  R.~ DiPerna and A.~ Madja, {\it Concentrations in regularization for 2D incompressible flow}. Comm. Pure Appl.
Math. {\bf 40} (1987), 301--345.
 
\bibitem{Ger} P. G\'erard, {\it R\'esultats r\'ecents sur les fluides parfaits incompressibles bidimensionnels [d'apr\`es J.-Y.
Chemin et J.-M. Delort]}, S\'eminire, Bourbaki, 1991/1992, no. 757, Ast\'erisque, Vol. {\bf 206}, 1992,411--444.

\bibitem{GMO} Y.~ Giga, T. ~ Miyakawa and H.~ Osada, {\it  \mbox{$2D$}  Navier-Stokes flow with measures as initial vorticity}. Arch. Rat.
Mech. Anal. {\bf 104} (1988), 223--250.

\bibitem{Helm} H. von Helmholtz, {\it \"Uber Integrale der hydrodynamischen Gleichungen,
welche der Wirbelbewegung entsprechen}.  J. reine angew. Math. {\bf 55} (1858), 25--55.

\bibitem{HK} T. Hmidi, S. Keraani, {\it Incompressible viscous flows in borderline Besov spaces}. Arch. Ration. Mech. Anal. {\bf 189} (2008), 283--300.

\bibitem{Kato} T. Kato and G. Ponce, {\it  Well-posedness of the Euler and Navier-Stokes equations in the Lebesgue spaces $L^p_s(\RR^2)$}. Rev. Mat. Iberoamericana {\bf 2} (1986), no. 1-2, 73--88.

\bibitem{LL} E. H. Lieb and M. Loss, { \it Analysis}, Grad. Studies in Math. {\bf 14}, Amer. Math. Soc., Providence,
RI, 1997.

\bibitem{lions1} P.-L.~ Lions, {\it Mathematical topics in fluid mechanics. Vol. 1}. The Clarendon Press Oxford University Press, New York, 1996.

\bibitem{FLX} M. C. Lopes Filho, H. J. Nussenzveig Lopes and Z. Xin, {\it Existence of
vortex sheets with reflection symmetry in two space dimensions}, Arch. Ration. Mech. Anal., {\bf 158}(3) (2001), 235--257.

\bibitem{Maj} A. J.~ Majda and A. L.~ Bertozzi,  {\it  Vorticity and incompressible flow}, Cambridge Texts
in Applied Mathematics, vol. {\bf 27}, Cambridge University Press, Cambridge, 2002.

 \bibitem{Pak} H. C. Pak, Y. J. Park, {\it Existence of solution for the Euler equations in a critical Besov space }$B_{\infty,1}^1(\RR^n)$. Comm. Partial Differential Equations {\bf 29} (2004), no. 7-8, 1149--1166.
 
\bibitem{Peetre} J. Peetre, {\it On convolution operators leaving $L^{p,\lambda}$ spaces invariant}. Ann. Mat. Pura Appl. (4) 72 1966, 295--304.
\bibitem{Ser} P.~ Serfati, {\it Structures holomorphes \`a faible r\'egularit\'e spatiale en m\'ecanique des fluides}. J. Math. Pures Appl. {\bf 74} (1995), 95--104.

\bibitem{Spanne} S. Spanne, {\it  Some function spaces defined using the mean oscillations over cubes}. Annali della Scuola Normale Superiore di Pisa, Classe di Scienze 3e serie, tome 19, no 4 (1965), 593--608.

\bibitem{tan}Y.~ Taniuchi, {\it Uniformly local Lp Estimate for 2D Vorticity Equation and Its Application to Euler Equations with Initial Vorticity in {\rm BMO}}. Comm. Math Phys., {\bf 248} (2004), 169--186.

\bibitem{Vishik1} M.~ Vishik, {\it Incompressible flows of an ideal fluid with vorticity in borderline spaces of Besov type}. (English, French summary) 
Ann. Sci. \'Ecole Norm. Sup. (4) {\bf 32} (1999), no. 6, 769--812. 

\bibitem{Vishik2} M. Vishik, {\it Hydrodynamics in Besov Spaces}, Arch. Rational Mech. Anal. {\bf{145}} (1998), 197--214.  

\bibitem{Y1} Y.~ Yudovich, {\it Nonstationary flow of an ideal incompressible liquid}. Zh. Vych. Mat., {\bf 3} (1963), 1032--1066.

\bibitem{Y2} Y.~ Yudovich, {\it Uniqueness theorem for the basic nonstationary problem in the dynamics of an ideal incompressible fluid}. Math. Res. Lett., {\bf 2} (1995), 27--38.

\bibitem{Wolib} W. Wolibner,{\it  Un th\'eor\`eme sur l'existence du mouvement plan d'un fluide parfait homog\`ene, incompressible, pendant un temps infiniment long,} Math. Z, Vol. 37, 1933, pp. 698--627.

\end{thebibliography}
\end{document}